\let\originalleft\left
\let\originalright\right
\renewcommand{\left}{\mathopen{}\mathclose\bgroup\originalleft}
\renewcommand{\right}{\aftergroup\egroup\originalright}
\renewcommand{\Re}{\operatorname{Re}}
\renewcommand{\Im}{\operatorname{Im}}
\newcommand{\Log}{\operatorname{Log}}
\newcommand{\Arg}{\operatorname{Arg}}
\newcommand{\C}{\mathbb{C}}
\newcommand{\R}{\mathbb{R}}
\newcommand{\eps}{\varepsilon}
   \def\MR#1{}
\newtheorem*{rep@theorem}{\rep@title}
\newcommand{\newreptheorem}[2]{%
\newenvironment{rep#1}[1]{%
 \def\rep@title{#2 \ref{##1}}%
 \begin{rep@theorem}}%
 {\end{rep@theorem}}}
\newtheorem{theorem}{Theorem}
\newtheorem{lemma}{Lemma}
\begin{document}
\baselineskip=17pt

\title[Newman's conjecture for the extended Selberg class]{A proof of Newman's conjecture for the extended Selberg class}
\author{Alexander Dobner}
\address{Department of Mathematics, UCLA\\
520 Portola Plaza\\
Los Angeles CA 90095}
\email{adobner@math.ucla.edu}
\date{}

\begin{abstract}
Newman's conjecture (proved by Rodgers and Tao in 2018) concerns a certain family of deformations $\{\xi_t(s)\}_{t \in \R}$ of the Riemann xi function for which there exists an associated constant $\Lambda \in \R$ (called the de Bruijn-Newman constant) such that all the zeros of $\xi_t$ lie on the critical line if and only if $t \geq \Lambda$. The Riemann hypothesis is equivalent to the statement that $\Lambda \leq 0$, and Newman's conjecture states that $\Lambda \geq 0$.

In this paper we give a new proof of Newman's conjecture which avoids many of the complications in the proof of Rodgers and Tao. Unlike the previous best methods for bounding $\Lambda$, our approach does not require any information about the zeros of the zeta function, and it can be readily applied to a wide variety of $L$-functions. In particular, we establish that any $L$-function in the extended Selberg class has an associated de Bruijn-Newman constant and that all of these constants are nonnegative. 

Stated in the Riemann xi function case, our argument proceeds by showing that for every $t < 0$ the function $\xi_t$ can be approximated in terms of a Dirichlet series $\zeta_t(s)=\sum_{n=1}^{\infty}\exp(\frac{t}{4} \log^2 n)n^{-s}$ whose zeros then provide infinitely many zeros of $\xi_t$ off the critical line.
\end{abstract}
\subjclass[2020]{11M06, 11M41}
\keywords{generalized Newman's conjecture, extended Selberg class, de Bruijn-Newman constant}
\maketitle

\section{Introduction}

Let
\[
\xi(s) \coloneqq \frac{1}{2}s(s-1)\pi^{-s/2}\Gamma\left(\frac{s}{2}\right)\zeta(s)
\]
be the Riemann xi function where $\zeta$ denotes the Riemann zeta function. The xi function is entire and satisfies the functional equation $\xi(1-s) = \xi(s)$, from which one may deduce that the zeros of $\xi(s)$ are symmetric about the line $\Re s = \frac{1}{2}$, also known as the critical line. The Riemann hypothesis is equivalent to the statement that all the zeros of the xi function lie on this line.

In order to study the behavior of $\xi$ on the critical line, one may consider the function $z \mapsto \xi\left(\frac{1+iz}{2}\right)$ which has a Fourier representation (see \cite[p.~255]{titchmarsh1986}),
 \begin{equation} \label{eq:fourier}
\xi\left(\frac{1+iz}{2}\right) = \int_{-\infty}^{\infty} \Phi(u) e^{i z u}\, du,
\end{equation}
where
\begin{equation} \label{eq:phisum}
\Phi(u) \coloneqq 4\sum_{n=1}^\infty \left(2\pi^2n^4 e^{9u}-3\pi n^2 e^{5u}\right)e^{-\pi n^2 e^{4u}}
\end{equation}
is a super-exponentially decaying even function. (The evenness of $\Phi$ follows from the fact that it is the Fourier transform of an even, real-valued function.)

In the course of his investigations on the Riemann hypothesis, P\'{o}lya \cite{polya1926} proved the rather remarkable result that if one replaces $\Phi$ in \eqref{eq:fourier} with a certain approximation coming from the first term of \eqref{eq:phisum}, then the resulting ``deformed'' xi function has all of its zeros on the critical line. Moreover, he showed that the average spacing of these zeros is the same as the average spacing of the zeros of the true xi function. This work initiated the study of various other deformations of $\xi$ which come about by modifying $\Phi$ in some way. 

The deformations of interest to us in this paper come from the work of de Bruijn \cite{debruijn1950} and later Newman \cite{newman1976}, who considered the family $\{\xi_t\}_{t\in\R}$ of entire functions defined such that
\[
\xi_t\left(\frac{1+iz}{2}\right) \coloneqq \int_{-\infty}^{\infty} e^{t u^2} \Phi(u) e^{i z u}\, du.
\]
One may verify that each of these functions satisfies the functional equation $\xi_t(1-s)=\xi_t(s)$ as a consequence of the fact that $\Phi$ is even. Furthermore, a result of P\'{o}lya \cite{polya1927} implies that if $\xi_{t_0}$ has all of its zeros on the critical line for some $t_0\in\R$, then the same is true of $\xi_{t}$ for all $t > t_0$. De Bruijn proved that for all $t \geq 1/2$ the zeros of $\xi_t$ all lie on the critical line, and Newman subsequently proved that there is some $t < 1/2$ for which this is not the case. Newman concluded that there exists a real number $\Lambda \leq 1/2$ (now called the \textit{de Bruijn-Newman constant}) with the defining property that $\xi_t$ has all of its zeros on the critical line if and only if $t \geq \Lambda$.

Because $\xi = \xi_0$, it is clear that the Riemann hypothesis is equivalent to the statement that $\Lambda \leq 0$. Newman made the complementary conjecture that $\Lambda \geq 0$, and in doing so he pointed out that this is a quantitative form of the assertion that ``the Riemann hypothesis, if true, is only barely so.'' A progression of lower bounds on $\Lambda$ (see \cite{csordas1988nv}, \cite{teriele1991}, \cite{csordas1991rv}, \cite{norfolk1992rv}, \cite{csordas1993osv}, \cite{csordas1994sv}, \cite{odlyzko2000}, \cite{saouter2011gd}) culminated in a proof of Newman's conjecture by Rodgers and Tao \cite{rodgers2020t} in 2018. Their proof is inspired by previous work of Csordas, Smith, and Varga \cite{csordas1994sv} who noted that zeros of $\xi_t$ exhibit a repulsion effect as $t$ varies. Rodgers and Tao were able to use this repulsion to show via a rather involved argument that if $\Lambda < 0$, then as $t \to 0^-$ the zeros of $\xi_t$ spread out in such a way as to contradict known results about the gaps between zeta zeros. 

In this paper we will give a simpler proof of Newman's conjecture which does not rely on any information about the zeros of the zeta function. In fact, because we use such limited information about $\zeta$, we can state and prove our results in a generalized $L$-function setting. We will set up our generalization in Section~2 by giving the definition of a large class of Dirichlet series $\mathcal{S}^\sharp$ (known as the \textit{extended Selberg class} in the literature) for which it is reasonable to define a de Bruijn-Newman constant. This class contains the Riemann zeta function as well as all Dirichlet $L$-functions associated to primitive characters. The main restriction on membership in $\mathcal{S}^\sharp$ is that every Dirichlet series $F\in \mathcal{S}^\sharp$ must have a corresponding ``completed'' version $\xi^F$ satisfying a certain functional equation. Given such an $F$, we will show that one can always define a set of deformations $\{\xi^F_t\}_{t \in\R}$ analogous to the $\xi_t$ functions we defined above, and we prove the following theorem:

\begin{theorem} \label{setupthm}
For every $F \in \mathcal{S}^\sharp$, there is a real number $\Lambda_F$ such that all the zeros of $\xi^F_t$ lie on the critical line if and only if $t \geq \Lambda_F$. 
\end{theorem}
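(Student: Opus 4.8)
The plan is to reproduce, at the level of generality of the axioms of $\mathcal{S}^\sharp$, the classical chain of results of P\'olya, de Bruijn, and Newman; the guiding observation is that each link in that chain uses only structural features of $\xi$ — a Fourier representation with a rapidly decaying weight, a functional equation, and a bound on the order — rather than anything particular to $\zeta$. First I would extract from the axioms (convergence of the Dirichlet series of $F$ in a half-plane, the functional equation $\xi^F(s)=\omega\,\overline{\xi^F(1-\bar s)}$ together with its Gamma factors, and Phragm\'en--Lindel\"of in the critical strip) that $g^F(z)\coloneqq\xi^F\!\left(\tfrac{1+iz}{2}\right)$ is entire of order one, coincides on $\R$ with a fixed unimodular constant times a real-valued function, and admits a Fourier representation $g^F(z)=\int_{-\infty}^{\infty}\Phi^F(u)e^{izu}\,du$, where $\Phi^F$ — obtained by inverting the Mellin transform of the Gamma factors of $F$ against its Dirichlet coefficients, in exact analogy with \eqref{eq:phisum} — is again a fixed unimodular constant times a real-valued function and decays doubly exponentially. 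One then \emph{defines} $\xi^F_t$ by $\xi^F_t\!\left(\tfrac{1+iz}{2}\right)\coloneqq\int_{-\infty}^{\infty}e^{tu^2}\Phi^F(u)e^{izu}\,du$; since $e^{tu^2}$ is real and even and the product $e^{tu^2}\Phi^F$ retains the doubly exponential decay of $\Phi^F$, each $\xi^F_t$ is entire of order one and satisfies $\xi^F_t(s)=\omega\,\overline{\xi^F_t(1-\bar s)}$, so its zero set is symmetric about the critical line and the condition of the theorem is meaningful.

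For the monotonicity I would use the heat-flow identity $\xi^F_{t_2}\!\left(\tfrac{1+iz}{2}\right)=\exp\!\left(-(t_2-t_1)\tfrac{d^{2}}{dz^{2}}\right)\xi^F_{t_1}\!\left(\tfrac{1+iz}{2}\right)$, valid for $t_2\ge t_1$ (on the Fourier side it is just insertion of the factor $e^{(t_2-t_1)u^2}$, legitimate precisely because $\Phi^F$ decays faster than any Gaussian), together with the theorem of P\'olya, in the form sharpened by de Bruijn, that $\exp(-a\,d^{2}/dz^{2})$ with $a\ge 0$ maps the set of entire functions of order at most one that are real on $\R$ with only real zeros into itself. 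Membership of $\xi^F_{t_1}$ in this class when its zeros lie on the critical line is exactly where the order-one bound is used, since order one forces the Hadamard factorization to have genus at most one, hence no Gaussian factor. Hence $T_F\coloneqq\{t\in\R:\text{every zero of }\xi^F_t\text{ lies on the critical line}\}$ is an up-set; it is moreover closed, since if $t_n\downarrow t_\ast$ with $t_n\in T_F$ then $\xi^F_{t_n}\to\xi^F_{t_\ast}$ locally uniformly (dominated convergence) and Hurwitz's theorem forbids the nonzero limit from having a non-real zero. Thus $T_F$, if nonempty and proper, equals $[\Lambda_F,\infty)$ for $\Lambda_F\coloneqq\inf T_F\in\R$, which is the assertion of the theorem.

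Nonemptiness of $T_F$ is the de Bruijn upper bound: for $t$ larger than a threshold depending only on the functional-equation data of $F$ (its degree, conductor, and the shifts appearing in its Gamma factors), the Fourier transform of $e^{tu^2}\Phi^F(u)$ lies in the Laguerre--P\'olya class, so $\xi^F_t$ has only real zeros in $z$. One proves this by adapting de Bruijn's device: $\Phi^F$, like $\Phi$, is a superposition of Gaussians — each summand being a dilate of $\exp(-\beta e^{\gamma u})$, which de Bruijn's kernel representation handles — so a sufficiently strong Gaussian weight $e^{tu^2}$ drags the whole transform into the Laguerre--P\'olya class. Properness of $T_F$ is the Newman direction: for $t$ sufficiently negative the Taylor coefficients of $\xi^F_t\!\left(\tfrac{1+iz}{2}\right)$, which are the moments of the \emph{signed} measure $\Phi^F(u)e^{tu^2}\,du$, violate the Laguerre--Tur\'an inequalities satisfied by every Laguerre--P\'olya function — here one uses that $\Phi^F$ genuinely changes sign, which is visible from its explicit form — so $\xi^F_t$ has a zero off the critical line; alternatively, and far more strongly, this is subsumed by the main theorem of the paper, which gives $\Lambda_F\ge 0$ outright via the Dirichlet series comparison.

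I expect the main obstacle to lie in transporting de Bruijn's and Newman's concrete computations to an abstract $\Phi^F$: on the de Bruijn side, exhibiting, uniformly over $\mathcal{S}^\sharp$ and explicitly in terms of the Gamma-factor data, the Gaussian-superposition structure of $\Phi^F$ that forces membership in the Laguerre--P\'olya class once $t$ is large; and on the Newman side — should one not simply defer to the main theorem — verifying the requisite sign changes of $\Phi^F$ and carrying the Tur\'an-inequality violation through uniformly. By contrast, the order-one bound and the Fourier representation of the first paragraph, while demanding some care with Phragm\'en--Lindel\"of and the inverse Mellin transform, are routine consequences of the axioms.
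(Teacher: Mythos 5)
Your overall skeleton matches the paper's: set up $\Phi_F$ as the Fourier transform of $\xi^F$ on the critical line, check conjugate symmetry and super-Gaussian decay, define $\xi^F_t$, and then show the set $T_F$ of $t$ with all zeros on the critical line is a closed up-set that is nonempty and (by deferring to Theorem \ref{newmansconj}) proper. Your monotonicity and closedness steps are sound and essentially the same ingredient the paper uses: the paper invokes the quantitative P\'olya--de Bruijn statement (Theorem \ref{debruijnthm}) with $\Delta=0$, while you phrase it as the backward-heat operator $e^{-aD^2}$ preserving the Laguerre--P\'olya class of genus at most one; deferring properness to the main theorem is exactly what the paper does.

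The genuine gap is the nonemptiness of $T_F$. You propose to generalize de Bruijn's $\Lambda\le 1/2$ computation by claiming that $\Phi_F$ is a superposition of dilates of $\exp(-\beta e^{\gamma u})$ and that each summand is handled by ``de Bruijn's kernel representation,'' so that a strong Gaussian weight pulls the whole transform into the Laguerre--P\'olya class. This is not justified and does not work as stated: for general data in condition (iii), $\Phi_F(u)=2\alpha e^u\sum_n a_n\psi(ne^{2u}/Q)$ where $\psi$ is the inverse Mellin transform of $\prod_j\Gamma(\omega_j w+\mu_j)$ times polynomial factors, which is a Meijer/Fox-type function rather than an exponential of an exponential (and may oscillate when the $\mu_j$ are complex); moreover, even if each summand were of de Bruijn's special form, a term-by-term argument cannot close because the class of functions with only real zeros is not closed under addition, and the coefficients $a_n$ are arbitrary complex numbers with no positivity. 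You flag this as the main obstacle, but the point of the paper's proof is that no such computation is needed: since $F$ is a convergent Dirichlet series, its first term dominates in a right half-plane, so $\xi^F$ is zero-free there, and the functional equation then confines all zeros of $\xi^F$ to a vertical strip; equivalently all zeros of $H(z)=\xi^F\left(\frac{1+iz}{2}\right)$ lie in a horizontal strip $|\Im z|\le\Delta$, and the same quantitative de Bruijn theorem you already use for monotonicity (all roots of $G_t$ lie in $|\Im z|\le(\max(\Delta^2-2t,0))^{1/2}$) immediately yields that all zeros of $H_t$ are real once $t\ge\Delta^2/2$. By restricting yourself to the $\Delta=0$ (Laguerre--P\'olya) form of that theorem you deprived yourself of the tool that makes nonemptiness trivial. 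A secondary caution: your alternative Newman-style route via Tur\'an inequalities presumes $\Phi_F$ is real-valued with visible sign changes, but for general $F\in\mathcal{S}^\sharp$ one only has $\Phi_F(u)=\overline{\Phi_F(-u)}$, so that route would also need repair; since you (like the paper) may simply cite Theorem \ref{newmansconj} for properness, this is not essential, but the nonemptiness step as proposed is a real gap.
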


Previous authors have defined generalized de Bruijn-Newman constants but only for certain restricted classes of $L$-functions (see \cite{stopple2014} for the case of quadratic Dirichlet $L$-functions and \cite[Section 2.4]{andrade2014cm} for the case of certain automorphic $L$-functions). Our definition of $\Lambda_F$ for $F \in \mathcal{S}^\sharp$ subsumes these definitions, and we are also able to prove the analogue of Newman's conjecture in this general case,

\begin{theorem} \label{newmansconj}
$\Lambda_F \geq 0$ for every $F \in\mathcal{S}^\sharp$.
\end{theorem}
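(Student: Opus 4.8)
The plan is to prove Theorem~\ref{newmansconj} by contradiction: assume $\Lambda_F < 0$, so that for every $t \in (\Lambda_F, 0)$ the deformed function $\xi^F_t$ has all of its zeros on the critical line. The strategy, as advertised in the abstract, is to show that for negative $t$ the completed function $\xi^F_t$ is well-approximated by (a suitably normalized version of) the Dirichlet series
\[
F_t(s) = \sum_{n=1}^{\infty} a_F(n)\exp\!\left(\tfrac{t}{4}\log^2 n\right) n^{-s},
\]
and that this Dirichlet series provably has zeros off the critical line. The heuristic is transparent: the Gaussian weight $e^{tu^2}$ applied to $\Phi$ on the Fourier side corresponds, after unwinding the Mellin--Fourier dictionary relating $\Phi$ to $\zeta$, to multiplying the $n$-th Dirichlet coefficient by $\exp(\frac{t}{4}\log^2 n)$; for $t<0$ this is a rapidly decaying factor that tames the series into something whose analytic behavior one can control directly. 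So $F_t$ should essentially \emph{be} $\xi^F_t$ up to the archimedean factors (the gamma factors and polynomial prefactor), which have no zeros in the relevant region, hence the zeros of $\xi^F_t$ off the line come from zeros of $F_t$ off the line.

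The key steps, in order, would be: (1) Make precise the sense in which $\xi^F_t(\tfrac{1+iz}{2})$ equals $\int e^{tu^2}\Phi_F(u)e^{izu}\,du$ for the extended-Selberg-class analogue $\Phi_F$, and derive from the functional equation and the Dirichlet series a formula expressing $\xi^F_t$ (for $t<0$) as the completed object attached to $F_t$, i.e.\ $\xi^F_t(s)$ is, up to nonvanishing archimedean factors and an entire nonvanishing normalization, equal to a symmetrized combination of $F_t(s)$ and $F_t(1-s)$ coming from the functional equation. (2) Show $F_t(s)$ extends to an entire function for $t<0$ (the super-exponential decay $\exp(\frac{t}{4}\log^2 n) = n^{t(\log n)/4}$ kills the series for all $s$, since $n^{-\sigma}\cdot n^{t(\log n)/4}$ is summable for every fixed $\sigma$), so there is no pole to worry about and the approximation is between two entire functions. (3) Prove that $F_t$ has infinitely many zeros off the critical line. (4) Transfer these zeros to $\xi^F_t$ via the approximation / the symmetrized functional-equation identity, contradicting the assumption that $\xi^F_t$ has all zeros on the line, and conclude $\Lambda_F \geq 0$.

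For step (3), the cleanest route is to understand $F_t$ in a half-plane far to the right, where $F_t(s) \approx a_F(1) + a_F(2)\exp(\frac{t}{4}\log^2 2)2^{-s} + \cdots$, and then use the fact that a Dirichlet polynomial (or a Dirichlet series dominated by its first two nonconstant terms) with at least two nonzero frequencies has zeros, which are found by solving $2^{-s} \approx c$ for a suitable constant — these solutions have a fixed real part that one can check is \emph{not} $1/2$, and by a Rouch\'e/argument-principle comparison the full entire function $F_t$ has zeros near each of these, infinitely many of them (one in each period along a vertical line), all with real part bounded away from $1/2$. The point is that the decaying Gaussian weight makes the ``trivial'' leading-term analysis rigorous: unlike $\zeta$ itself, where the higher terms are not small, here the tail is genuinely negligible in a large region. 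One must be a little careful that the real part of these zeros stabilizes to something $\ne 1/2$ rather than drifting, but the two-term model $a_F(1)+a_F(2)\exp(\tfrac{t}{4}\log^2 2)2^{-s}=0$ pins it down.

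The main obstacle I expect is step (1): cleanly relating the \emph{deformed completed} function $\xi^F_t$ to the Dirichlet series $F_t$ for general $F \in \mathcal{S}^\sharp$. In the classical $\zeta$ case one has the explicit $\Phi$ of \eqref{eq:phisum} built out of theta-like sums, and the Gaussian deformation can be pushed through term by term; for a general member of the extended Selberg class one only has the functional equation and the Dirichlet series expansion, with gamma factors of unknown shape, so one needs an abstract argument — presumably a contour-integral / Mellin-transform manipulation — showing that applying the heat-type operator $e^{t\partial^2}$ (which is what $e^{tu^2}$ on the Fourier side does) to the completed $L$-function amounts exactly to inserting the weights $\exp(\frac{t}{4}\log^2 n)$ into the coefficients, plus an error governed by the archimedean factors that is controlled and zero-free in the region of interest. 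Quantifying that error uniformly enough to run Rouch\'e, while only assuming the axioms of $\mathcal{S}^\sharp$, is where the real work lies; the rest is comparatively routine complex analysis.
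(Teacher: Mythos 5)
Your overall skeleton (approximate $\xi^F_t$ by the weighted Dirichlet series $F_t$, find zeros of $F_t$, transfer them by Rouch\'e) is the paper's strategy, but the two places where you leave the work undone are precisely the places where your proposed route would break. First, step (1): the Gaussian deformation does \emph{not} commute with completion in the way you posit. Writing $\xi^F_t$ as a Gaussian convolution of $\xi^F$ along a vertical line, the $\gamma$-factor cannot simply be pulled out, and what the paper actually proves (Theorem \ref{mainthm}, by steepest descent applied to each term $B_{t,n}$) is $\xi^F_t(J_t(s))=\gamma_t(s)\left(F_t(s)+o_{y\to\infty}(1)\right)$, where $J_t(s)=s+\frac{|t|}{2}\log Q+\frac{|t|}{2}\sum_i\omega_i\Log(\omega_i s)$ is a nonlinear change of coordinates and the estimate is valid only when $s$ (not $J_t(s)$) lies in a vertical strip. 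The paper explicitly remarks that its approximation does not stay accurate if one forces $J_t(s)$ into a vertical strip, so the clean ``symmetrized approximate functional equation'' for $\xi^F_t(s)$ itself, with error controlled purely by nonvanishing archimedean factors, is exactly the statement that is not available; you flag this as the main obstacle but offer nothing to fill it, and it is where essentially all the analysis lives. Moreover, even granting a two-term symmetrized formula, a zero of $F_t(s)$ alone does not force a zero of a sum of two terms without an additional domination argument (just as off-line zeros of partial sums of $\zeta$ do not yield off-line zeros of $\zeta$); in the paper's single-main-term formulation this issue never arises.

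Second, step (3): the two-term model locates zeros of $F_t$ only in the regime where, on the vertical line where the first two weighted terms balance, the remaining terms are negligible. That happens when $|t|$ is large (this is the phenomenon of Figure \ref{fig:t30}); for $t$ close to $0^-$, which is the case you must handle to push $\Lambda_F$ all the way up to $0$, the weights $\exp(\frac{t}{4}\log^2 n)$ decay too slowly for the tail to be dominated by two terms, so the Rouch\'e comparison with $a_1+a_2\exp(\frac{t}{4}\log^2 2)2^{-s}$ fails exactly where you need it. In addition, the real part pinned down by the two-term model can a priori be exactly $\tfrac12$ for particular $F$ and $t$ (and coefficients such as $a_2$ may vanish), and you give no way to exclude this. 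The paper sidesteps both difficulties: it needs only the soft fact that $F_t$ has \emph{some} zero (Lemma \ref{Ftzerolemma}, proved by Hadamard factorization plus uniqueness of Dirichlet series coefficients), then Bohr almost periodicity (Theorem \ref{bohrthm}) with Rouch\'e supplies zeros of $F_t$ arbitrarily high in a fixed strip, and the map $J_t$ sends the corresponding zeros of $\xi^F_t$ into a curved region that eventually lies strictly to the right of the critical line no matter where that strip sits, so the horizontal position of the zeros of $F_t$ never needs to be known. As it stands, your proposal has genuine gaps at both the approximation step and the zero-production step.
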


We now give the idea behind our proof of Theorem \ref{newmansconj} (restricting to the zeta function case for ease of exposition). The main tool will be an approximation for $\xi_t$ that we establish for every $t<0$ and which we use to locate zeros of $\xi_t$ off the critical line. This approximation is of the form
\begin{equation} \label{eq:approxheuristic}
\xi_t(J_t(s)) \approx \text{[gamma-like factor]} \cdot \zeta_t(s)
\end{equation}
where 
\[
\zeta_t(s) \coloneqq \sum_{n=1}^\infty \exp\left(\frac{t}{4}\log^2 n\right)n^{-s}
\]
is an everywhere absolutely convergent Dirichlet series, and
\[
J_t(s)\coloneqq s+\frac{|t|}{4}\Log\left(\frac{s}{2\pi}\right)
\]
is a nonlinear change of coordinates. If $s$ is restricted to lie in a vertical strip, a crucial feature of our approximation is that it becomes increasingly accurate as the imaginary part of $s$ increases.

Such an approximation hints that for any $t<0$ and any vertical strip $V$, there should be a close correspondence between the zeros of $\zeta_t$ in $V$ and the zeros of $\xi_t$ in the curved region $J_t(V)$. We will prove that this is the case and also that there is some vertical strip where $\zeta_t$ has zeros at arbitrarily high heights. Since the region $J_t(V)$ eventually lies to the right of the the critical line at high enough heights, this argument produces zeros of $\xi_t$ which are off the critical line, thus implying Newman's conjecture.

An example of the correspondence between the zeros of $\zeta_t$ and $\xi_t$ in the case $t=-1$ and $V=\{-.3 \leq \Re s \leq -.2\}$ is depicted in Figure \ref{fig:t1}. The figure also indicates that this correspondence seems to extend quite a bit beyond the range we have just described.  Indeed, one can clearly see that the zeros of $\xi_t$ on the critical line show up as zeros of $\zeta_t$ near the inverse image of the critical line under the $J_t$ map. We note that our results in this paper are insufficient to explain this broader correspondence because the rigorous form of \eqref{eq:approxheuristic} that we prove does not remain accurate if one chooses $J_t(s)$ to lie in a vertical strip rather than $s$.

For large negative $t$ values, a different phenomenon occurs that is readily apparent in Figure \ref{fig:t30} (which depicts zeros for $t=-30$). In this case one sees that the zeros of $\xi_t$ begin to congregate near deterministic curves, and as $t$ becomes increasingly negative more of these curves appear. This phenomenon was originally discovered by Rudolph Dwars (see the comments at \url{terrytao.wordpress.com/2018/12/28}) while doing extensive numerical work on the zeros of $\xi_t$. Our results can be used to give a rigorous explanation for these curves, but since the case of large negative $t$ values is not relevant to Newman's conjecture we do not pursue this direction here.

\begin{figure}
	\centering
	\begin{subfigure}[b]{.46\textwidth}
		\includegraphics[width=\textwidth]{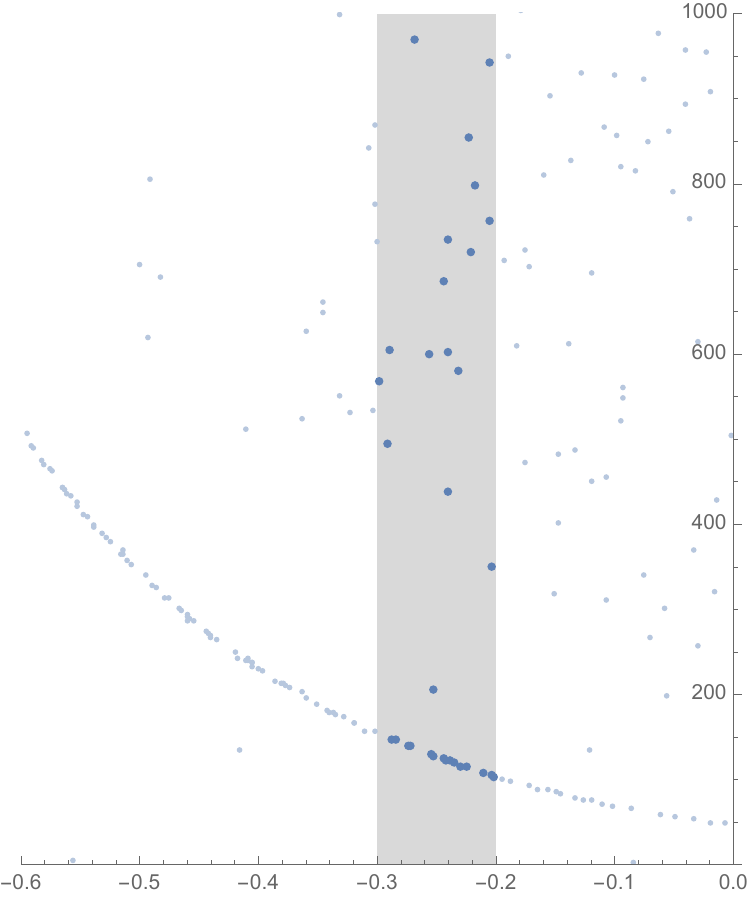}
		\caption{Zeros of $\zeta_{-1}$. }
		\end{subfigure}
	\hfill
	\begin{subfigure}[b]{.46\textwidth}
		\includegraphics[width=\textwidth]{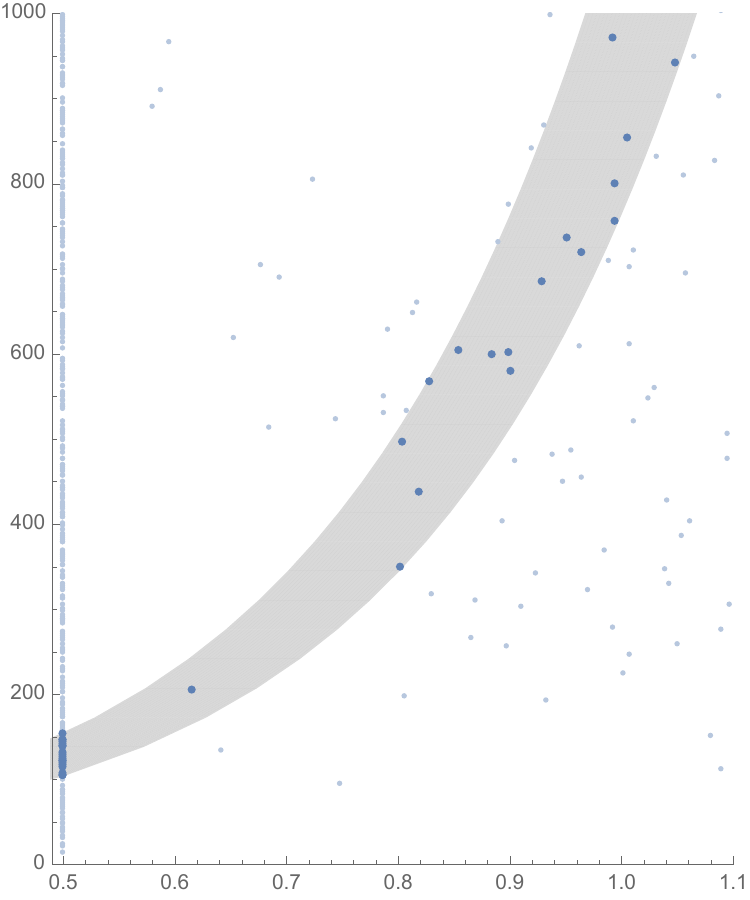}
		\caption{Zeros of $\xi_{-1}$}
	\end{subfigure}
	\caption{The zeros of $\zeta_t$ in any vertical strip are mapped quite precisely under $J_t$ to zeros of $\xi_t$ in the corresponding curved region. This correspondence provides zeros of $\xi_t$ off the critical line. For $\xi_t$ the zeros are symmetric about the critical line, but in (b) we only depict the zeros on and to the right of the line. }
\label{fig:t1}
\end{figure}

\begin{figure}
	\vspace{2em}
	\centering
	\begin{subfigure}[b]{.46\textwidth}
		\includegraphics[width=\textwidth]{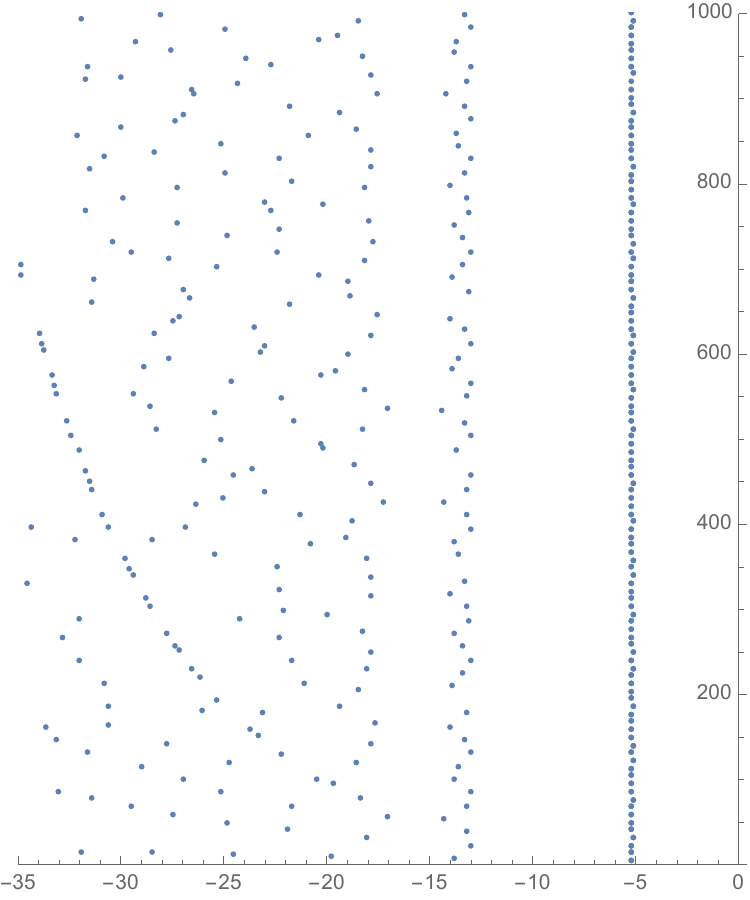}
		\caption{Zeros of $\zeta_{-30}$. }
		\label{fig:ft1}
	\end{subfigure}
	\hfill
	\begin{subfigure}[b]{.46\textwidth}
		\includegraphics[width=\textwidth]{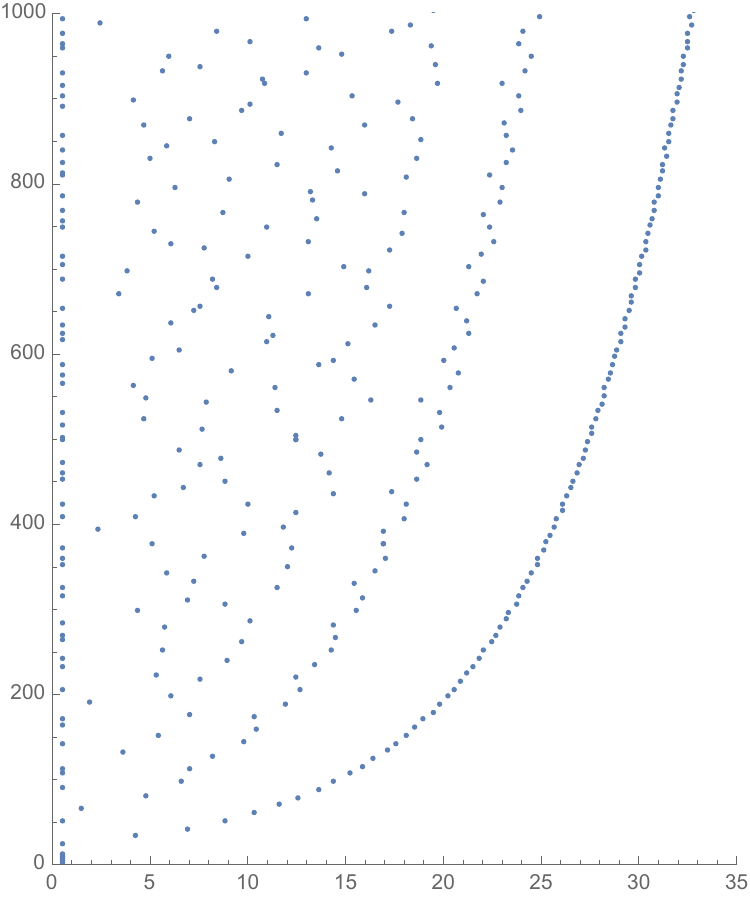}
		\caption{Zeros of $\xi_{-30}$}
	\end{subfigure}
	\caption{For large negative $t$ values, the zeros of $\xi_t$ congregate near curves which are the images of certain vertical lines under the $J_t$ map. On these lines the Dirichlet series $\zeta_t$ is dominated by two consecutive terms of equal magnitude, which leads to the regular pattern of zeros. }
\label{fig:t30}
\end{figure}

To prove the approximation \eqref{eq:approxheuristic} the starting point will be to rewrite $\xi_t$, for $t < 0$, as a certain contour integral of the xi function times a complex Gaussian. This integral is taken over a vertical line in the complex plane, and we are able to estimate it by shifting the contour to the right so that $\zeta(s)$ can be written in terms of its absolutely convergent Dirichlet series. Interestingly, in the $t > 0$ case there is an analogous contour integral representation of $\xi_t$, but the integral is taken over a horizontal line instead, so it is not possible to proceed in the same manner. It turns out that is possible to derive an estimate for $\xi_t$ in terms of partial sums of $\zeta_t$ in this case. This is done in \cite[Thm. 1.3]{polymath2019}, where the estimate is is used to prove an upper bound on the de Bruijn-Newman constant. 
    
One final note we make about our proof is that it reveals that Newman's conjecture holds for completely analytic rather than arithmetic reasons. To highlight this, we mention that our methods can be used to show that if one selects essentially \textit{any} Dirichlet series $F$ and multiplies $F$ by arbitrary $\Gamma$ factors to produce a ``mock'' $\xi^F$ function, then there are corresponding $\xi^F_t$ functions for all $t<0$ and these functions always have zeros off the critical line (though in this completely general case there is no associated de Bruijn-Newman constant because of the lack of a functional equation). Thus, our proof of Newman's conjecture is quite different from the proof of Rodgers and Tao which depends fundamentally on knowledge about the gaps between zeta zeros and hence on the arithmetic structure of the zeta function.

\subsection{Notation}
We will use $\Arg z$ to denote the argument of a complex number, where this value is chosen to lie in the interval $(-\pi,\pi]$. Correspondingly, we use $\Log z$ to denote the standard branch of the complex logarithm (i.e. $\Log z = \log |z| + i \Arg z$), and we define $\sqrt{z} \coloneqq \exp(\frac{1}{2}\Log z)$.

We will use the usual asymptotic notation $X \ll Y$, $Y \gg X$ and $X = O(Y)$ to indicate that there exists a positive constant $C$ such that $|X| \leq CY$. If the constant $C$ depends on other parameters then this will be indicated in the text or by a subscript in the notation (i.e. if $C$ depends on $Z$ then we write $X \ll_Z Y$ or $X=O_Z(Y)$). The notation $X \asymp Y$ indicates $X \ll Y \ll X$, and the notation $X = o_{T\to\infty}(Y)$ indicates that there exists a bound $|X| \leq c(T) Y$ where $c(T) \to 0$ as $T \to \infty$.

In Section 2, we will give the definition of a certain family $\mathcal{S}^\sharp$ of Dirichlet series, and we will subsequently choose an arbitrary $F\in\mathcal{S}^\sharp$ which will remain fixed for the rest of the paper. We will also choose a certain meromorphic function $\gamma$ (depending on $F$) which will be fixed for the rest of the paper. \textit{All} implicit constants in the estimates that we prove will then be allowed to depend on $F$ and $\gamma$. Similarly, if a statement is said to hold for ``sufficiently large $y$'' then the meaning of sufficiently large may depend on $F$ and $\gamma$. Readers who are primarily interested in the Riemann zeta function case of Theorem~\ref{newmansconj} (i.e. Newman's conjecture) may want to skim the definition of $\mathcal{S}^\sharp$ in Section 2 in order to understand the notation and then skip to Section 3 and assume that $F \coloneqq \zeta$.

\subsection{Acknowledgments}
The author thanks Terence Tao for his guidance and for many helpful discussions. The author also thanks the anonymous referee for many comments and suggestions.

\section{The generalized de Bruijn-Newman Constant $\Lambda_F$}
In 1989, Selberg \cite{selberg1989} introduced a definition for a class of Dirichlet series $\mathcal{S}$ now known as the Selberg class. Among other things, Selberg conjectured that all the functions in $\mathcal{S}$ satisfy a corresponding Riemann hypothesis, and the conditions he imposed on $\mathcal{S}$ were chosen with this in mind. 

For the purposes of defining generalized de Bruijn-Newman constants, one of Selberg's conditions (the existence of a functional equation) is completely essential, whereas some of the other conditions (e.g. the existence of an Euler product) are not necessary. Because of this, the Dirichlet series that we consider will be members of a class known as the extended Selberg class $\mathcal{S}^\sharp$ which was defined by Kaczorowski and Perelli \cite{kaczorowski1999p}. This class may be thought of as capturing only the analytic rather than arithmetic properties among those given by Selberg.

A function $F(s)$ is said to be a member of $\mathcal{S}^\sharp$ if it is not identically zero and it satisfies the following conditions:
\begin{enumerate}[(i)]
\item (Dirichlet series) $F$ has a Dirichlet series representation
\[
F(s) = \sum_{n=1}^\infty \frac{a_n}{n^s}
\]
which converges absolutely for all $s$ with $\Re s > 1$. 

\item (Meromorphic continuation) $(s-1)^m F(s)$ is an entire function of finite order for some nonnegative integer $m$.
\item (Functional Equation) Let $m$ be the order of the pole of $F$ at $s=1$ (or let $m=0$ in the case where $F$ has no pole). There exists a function $\gamma(s)$ of the form
\[
\gamma(s) \coloneqq \alpha s^m (s-1)^m Q^s \prod_{i=1}^k \Gamma(\omega_i s + \mu_i)
\]
where $\alpha \in \C \setminus \{0\}$, $Q > 0$, $\omega_i > 0$, and $\mu_i \in \C$ with $\Re \mu_i \geq 0$ such that
\[
\xi^F(s) \coloneqq \gamma(s) F(s)
\]
satisfies
\begin{equation} \label{eq:fneq}
\xi^F(s)=\overline{\xi^F(1-\overline{s})}.
\end{equation}
 \end{enumerate}

\bigskip
\bigskip
Our notation in (iii) differs slightly from the usual notation given in definitions for $\mathcal{S}$ and $\mathcal{S}^\sharp$ (e.g. \cite[p. 160]{kaczorowski2006}) because we include the polynomial factor $s^m (s-1)^m$ in the functional equation. This notation will be convenient for us because we require that the ``completed'' $L$-function $\xi^F$ be an entire function. In our notation it is clear that if $F$ is the Riemann zeta function, then one may choose $\gamma(s) = \frac{1}{2}s(s-1) \pi^{-s/2}\Gamma\left(\frac{s}{2}\right)$ to make $\xi^F$ the usual Riemann xi function.

There are a few straightforward consequences of conditions (i),(ii), and (iii) which we will need later on, so we state these now. Firstly, from (i) it is clear that $|a_n|$ cannot grow too fast. For instance, note that (i) implies, say, $a_n=O(n^2)$. Secondly, since the $\Gamma$ function has no poles in the right half-plane, (ii) and (iii) imply that $\xi^F$ is an entire function. Thirdly, it must be the case that $a_n$ is nonzero for more than one $n$ (i.e. the Dirichlet series representation of $F$ has more than one term). To see this, note that if the Dirichlet series had only a single term then $F$ would have no zeros, but $F$ must have many ``trivial zeros'' coming from the poles of the gamma factors in the functional equation. Lastly, for any vertical strip $V$ in the complex plane, if $s \in V$ and $s$ is bounded away from $1$, then a bound $F(s) = O\left((1+|\Im s|)^A\right)$ holds for some $A\geq 0$ depending on $V$. This follows by applying the Phragm\'{e}n-Lindel\"{o}f principle in the same way one does to get convexity bounds for $\zeta(s)$ in vertical strips. 

We will now state a lemma about the magnitude of $\gamma(s)$ in certain regions, which we will need later. The proof, which is an application of Stirling's formula, will be given in Section \ref{lemmasection}.

\begin{lemma}\label{gammasizelemma}
Let $\gamma(s)$ be one of the functions described in condition (iii) of the extended Selberg class definition. Let $D>0$ and $0 \leq \theta < 1$, and let $s$ be a complex number which is at least unit distance away from the poles and zeros of $\gamma$ and which satisfies $|\Re s| \leq D |\Im s|^\theta$. 
There exist $K,K' >0$ (depending on $\gamma, D,$ and $\theta$) such that
\[
\exp(-K' |\Im s|) \ll |\gamma(s)| \ll \exp(-K |\Im s|),
\]
where the implicit constants may depend on $\gamma, D,$ and $\theta$.
\end{lemma}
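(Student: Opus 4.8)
The plan is to reduce the estimate to Stirling's asymptotic formula for $\log\Gamma$ applied to each factor $\Gamma(\omega_i s + \mu_i)$ and to track the real part of $\log\gamma(s)$. Write $s = \sigma + i\tau$ with (WLOG, by the symmetry of the bound) $\tau > 0$ large; the hypothesis $|\sigma| \le D\tau^\theta$ with $\theta < 1$ means that $s$ lives in a region where the imaginary part dominates, so $|s| = \tau(1+o(1))$ and $\Arg s \to \pi/2$ as $\tau \to \infty$. The polynomial factors $\alpha s^m(s-1)^m$ and the exponential factor $Q^s$ contribute $O(\log \tau)$ and $O(\tau^\theta)$ respectively to $\log|\gamma(s)|$, both of which are negligible compared to the claimed main term of order $\tau$; so everything comes down to the $\Gamma$ factors.

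For each $i$, set $w_i = \omega_i s + \mu_i$. Since $\omega_i > 0$ and $\Re\mu_i \ge 0$, we have $\Re w_i = \omega_i \sigma + \Re\mu_i$, which satisfies $|\Re w_i| \le \omega_i D\tau^\theta + |\mu_i| \le D'|\Im w_i|^\theta$ for a suitable $D'$ once $\tau$ is large, and $|\Im w_i| = \omega_i\tau(1+o(1)) \to \infty$; in particular $w_i$ stays away from the poles of $\Gamma$ (the nonpositive integers lie near the negative real axis) for $\tau$ large. Stirling gives
\[
\log\Gamma(w_i) = \left(w_i - \tfrac12\right)\Log w_i - w_i + \tfrac12\log(2\pi) + O(1/|w_i|),
\]
and taking real parts, the dominant contribution is $\Re(w_i \Log w_i - w_i)$. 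Writing $w_i = r_i e^{i\phi_i}$ with $r_i = |w_i| = \omega_i\tau(1+o(1))$ and $\phi_i = \Arg w_i \to \pi/2$, one computes
\[
\Re(w_i\Log w_i - w_i) = r_i\cos\phi_i \cdot \log r_i - r_i\phi_i\sin\phi_i - r_i\cos\phi_i.
\]
As $\phi_i \to \pi/2$ this behaves like $-r_i \cdot \tfrac{\pi}{2}\sin\phi_i \sim -\tfrac{\pi}{2}\omega_i\tau$ (the $\cos\phi_i\log r_i$ term is $o(\tau)$ because $\cos\phi_i = O(\tau^{\theta-1})$ and $\log r_i = O(\log\tau)$). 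Summing over $i$, $\log|\gamma(s)| = -\tfrac{\pi}{2}\bigl(\sum_i \omega_i\bigr)\tau\,(1+o(1))$, which is squeezed between $-K'\tau$ and $-K\tau$ for any $0 < K < \tfrac{\pi}{2}\sum_i\omega_i < K'$ once $\tau$ is large enough; finitely many small $\tau$ (bounded away from the poles) are handled by compactness since $|\gamma|$ is continuous and nonvanishing there.

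**The main obstacle** is bookkeeping the error terms uniformly: I must check that the $o(\tau)$ terms — the polynomial factors, $Q^s$, the $\cos\phi_i \log r_i$ pieces, and the Stirling remainder — are genuinely $o(\tau)$ uniformly over the whole region $|\sigma| \le D\tau^\theta$, which is exactly where $\theta < 1$ is used (if $\theta = 1$ the factor $Q^s$ and the $\cos\phi_i\log r_i$ terms could be comparable to $\tau$, and $\Arg s$ need not approach $\pi/2$). I would organize this by fixing a large threshold $\tau_0 = \tau_0(\gamma, D, \theta)$ beyond which all error terms are, say, $\le \tfrac{1}{10}\cdot\tfrac{\pi}{2}\sum_i\omega_i \cdot \tau$, proving the bound there with explicit $K, K'$, and then absorbing the compact remaining region into the constants. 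A minor technical point is ensuring the $w_i$ avoid the poles of $\Gamma$: since $\Re\mu_i \ge 0$ and $\omega_i > 0$, $\Re w_i \ge -\omega_i D\tau^\theta$ while $|\Im w_i| \asymp \tau$, so for $\tau$ large $w_i$ is far from $\{0,-1,-2,\dots\}$, and for the bounded region the hypothesis already places $s$ at unit distance from the poles of $\gamma$.
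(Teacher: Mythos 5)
Your proof is correct and follows essentially the same route as the paper's: reduce to Stirling's formula applied to each factor $\Gamma(\omega_i s+\mu_i)$, observe that the polynomial and $Q^s$ factors contribute only $O(|\Im s|^{\theta'})$ with $\theta'<1$ to $\log|\gamma(s)|$, and dispose of bounded $|\Im s|$ by compactness. The only difference is cosmetic: you extract the precise asymptotic $\log|\gamma(s)|\sim-\tfrac{\pi}{2}\bigl(\sum_i\omega_i\bigr)|\Im s|$, whereas the paper stops at the cruder observation that $\Arg(\omega_i s+\mu_i)$ lies in $[\pi/4,3\pi/4]$ (or its reflection), which already yields the two-sided exponential bound.
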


One immediate consequence of this lemma (and the Phragm\'{e}n-Lindel\"{o}f bound on $F$) is that $\xi^F(s)$ decays exponentially as $\Im(s)\to\pm\infty$ in any fixed vertical strip. This means that for any $F \in \mathcal{S}^\sharp$, we can perform the same Fourier analytic setup that we did for the Riemann xi function in the introduction. We define
\begin{equation} \label{eq:PhiFdef}
\Phi_F(u) \coloneqq \frac{1}{2\pi}\int_{-\infty}^{\infty} \xi^F\left(\frac{1+ix}{2}\right) e^{-ixu}\, dx
\end{equation}
to be the Fourier transform of $\xi^F$ on the critical line, and likewise we define the family of entire functions $\left\{\xi^F_t\right\}_{t\in\R}$ so that
\begin{equation} \label{eq:xiFtdef}
\xi^F_t\left(\frac{1+iz}{2}\right) \coloneqq \int_{-\infty}^{\infty} e^{t u^2} \Phi_F(u) e^{izu}\, du.
\end{equation}
In the proof of Theorem \ref{setupthm}, we will show that $\Phi_F$ has rapid enough decay for this definition to make sense for all $t\in\R$, and that there exists a real number $\Lambda_F$ for which $\xi^F_t$ has all of its zeros on the critical line if and only if $t \geq \Lambda_F$.

It is important to note that, strictly speaking, for a given Dirichlet series $F \in \mathcal{S}^\sharp$ the choice of $\gamma$ is not unique. For example, $\alpha$ can be scaled by any nonzero real number and the functional equation \eqref{eq:fneq} will still hold. However, it turns out that other than the choice of scaling, $\gamma$ is unique (see \cite[Thm. 2.1]{conrey1993g} which is stated for $\mathcal{S}$ but whose proof only requires the properties we gave for $\mathcal{S}^\sharp$). Since scaling does not affect the locations of the zeros of $\xi^F_t$, there will be no ambiguity in our definition of $\Lambda_F$.

From now on we shall assume that $F$ has been fixed, and that the parameters $\alpha, Q,\omega_i$, and $\mu_i$ have all been chosen as well. All implied constants in error terms will be allowed to depend on these parameters.

In order to prove Theorem \ref{setupthm}, we will rely on the following theorem of de Bruijn extending a previous theorem of P\'{o}lya:
\begin{theorem}[De Bruijn {\cite[Thm. 13]{debruijn1950}}, cf. P\'{o}lya {\cite[Thm. 1]{polya1927}}] \label{debruijnthm}
Let $\phi \colon \R \to \C$ be an integrable function satisfying $\phi(u)=\overline{\phi(-u)}$ and $\phi(u)=O\left(e^{-|u|^b}\right)$ for some $b>2$, and let $G(z) \coloneqq \int_{-\infty}^{\infty} \phi(u) e^{izu}\, du$. For any $t>0$, let $G_t(z)\coloneqq \int_{-\infty}^{\infty} e^{t u^2} \phi(u) e^{izu}\, du$. If all the roots of $G$ lie in the strip $|\Im z| \leq \Delta$, then all the roots of $G_t$ lie in the strip
\[
|\Im z| \leq \left(\max(\Delta^2-2t,0)\right)^{\frac{1}{2}}.
\]
\end{theorem}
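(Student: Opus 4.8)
The plan is to translate the statement into one about the backward heat operator $e^{-tD^2}$ (with $D=d/dz$) acting on entire functions of order $<2$, to prove that statement first for polynomials by following the motion of the zeros, and then to remove the polynomial restriction by a Hadamard-factorization-plus-Hurwitz limiting argument.

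I would begin with analytic preliminaries. Because $\phi(u)=O(e^{-|u|^b})$ with $b>2$, a saddle-point estimate on $\int e^{tu^2}|\phi(u)|e^{|\Im z||u|}\,du$ shows that $G$ and each $G_t$ are entire of order $b/(b-1)<2$; the relation $\phi(u)=\overline{\phi(-u)}$ makes them real on $\R$, and $G_t\not\equiv0$ since $\phi\not\equiv0$. Matching Taylor coefficients (interchanging a sum and an integral, legitimate by the super-exponential decay of $\phi$) shows $G_t=\sum_{j\ge0}\frac{(-t)^j}{j!}G^{(2j)}$, which I abbreviate $e^{-tD^2}G$, and more usefully that for every entire $f$ of order $<2$,
\[
e^{-tD^2}f(z)=\frac{1}{\sqrt{4\pi t}}\int_{-\infty}^{\infty}f(z+iv)\,e^{-v^2/(4t)}\,dv ,
\]
the integral converging precisely because $b/(b-1)<2$. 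Thus $e^{-tD^2}$ is a well-defined linear operator on order-$<2$ functions that respects reality on $\R$, has the semigroup property in $t$, and obeys the intertwining identity $e^{-tD^2}\big(e^{az}q(z)\big)=e^{az}e^{-ta^2}\,[e^{-tD^2}q](z-2ta)$ for real $a$ and any polynomial $q$, so that conjugating by $e^{az}$ merely translates the zero set horizontally.

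The heart of the argument is the polynomial case: if $p$ is a real polynomial with all zeros in $|\Im z|\le\Delta$, then $P_t\coloneqq e^{-tD^2}p$ has all zeros in $|\Im z|\le(\max(\Delta^2-2t,0))^{1/2}$. Here $P_t$ has constant degree $n$ and coefficients polynomial in $t$, so its zeros $\rho_1(t),\dots,\rho_n(t)$ (with multiplicity) vary continuously, and by conjugate symmetry $\eta(t)\coloneqq\max_j\Im\rho_j(t)=\max_j|\Im\rho_j(t)|\ge0$. From $\partial_tP_t=-\partial_z^2P_t$ one obtains, at any $t_0$ where a simple zero $\rho$ attains the maximum with $\eta(t_0)>0$, the flow equation $\rho'(t_0)=2\sum_{\sigma}\tfrac{m_\sigma}{\rho-\sigma}$, the sum running over the remaining zeros $\sigma$ with their multiplicities. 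The conjugate $\bar\rho$ is one such zero and contributes exactly $-1/\eta$ to $\Im\rho'(t_0)$, while every other term contributes $\le0$ because $\Im\sigma\le\eta$; hence $\eta'(t_0)\le-1/\eta(t_0)$, so $\frac{d}{dt}(\eta^2+2t)\le0$. Consequently $\eta(t)^2+2t$ is non-increasing as long as $\eta(t)>0$ (a tied maximum or a multiple zero is handled by the same computation applied to the one-sided derivative, multiple zeros occurring only at isolated values of $t$), which together with $\eta(0)\le\Delta$ yields $\eta(t)\le(\Delta^2-2t)^{1/2}$ until $\eta$ reaches $0$; once $\eta=0$ it remains $0$ because $e^{-tD^2}$ preserves real-rootedness — itself a consequence of $e^{-tD^2}=\lim_n(1-\tfrac tnD^2)^n$ on a fixed polynomial, the factorization $1-\tfrac tnD^2=(1-\sqrt{t/n}D)(1+\sqrt{t/n}D)$, Laguerre's theorem that $q\mapsto q\pm cq'$ preserves real-rootedness for real $c$, and Hurwitz's theorem for polynomials. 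This gives the stated bound in every case.

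Finally I would remove the polynomial hypothesis. If $G$ has finitely many zeros, Hadamard factorization (genus $\le1$ since $\operatorname{ord}G<2$) together with reality gives $G=c\,e^{az}p(z)$ with $a$ real and $p$ a real polynomial with zeros in the strip, and the intertwining identity reduces to the polynomial case. In general, write $G(z)=e^{\alpha z}\prod_kE_1(z/\rho_k)$ with $\alpha$ real, order the zeros so that each non-real $\rho_k$ is immediately followed by $\bar\rho_k$, and truncate at a level $N$ where this pairing is complete; then $f_N(z)=e^{\alpha z}\prod_{k\le N}E_1(z/\rho_k)$ equals $e^{(\text{real linear})}$ times a real polynomial with zeros in $|\Im z|\le\Delta$, so by the finite case each $e^{-tD^2}f_N$ has all zeros in $|\Im z|\le(\max(\Delta^2-2t,0))^{1/2}$. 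Since $f_N\to G$ locally uniformly and $|f_N(z+iv)|\le|G(z+iv)|\exp(C\varepsilon_N v^2+C_z)$ with $\varepsilon_N\coloneqq\sum_{k>N}|\rho_k|^{-2}\to0$, dominated convergence in the integral formula for $e^{-tD^2}$ shows $e^{-tD^2}f_N\to G_t$ locally uniformly once $C\varepsilon_N<\tfrac1{4t}$. As $G_t\not\equiv0$, Hurwitz's theorem forces every zero of $G_t$ to be a limit of zeros of the $e^{-tD^2}f_N$, hence to lie in the closed strip $|\Im z|\le(\max(\Delta^2-2t,0))^{1/2}$, which is the assertion. The step I expect to be most delicate is this last interchange of $e^{-tD^2}$ with $N\to\infty$: the operator $e^{-tD^2}$ is wildly discontinuous on general entire functions, and one genuinely needs the uniform growth control coming from $\operatorname{ord}G<2$ (equivalently $b>2$) to push the limit through; by comparison the zero-flow computation, though it is the conceptual core, is clean.
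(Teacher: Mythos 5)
You should first know that the paper does not prove this statement at all: Theorem \ref{debruijnthm} is imported verbatim as Theorem 13 of de Bruijn's 1950 paper and used as a black box in the proof of Theorem \ref{setupthm}, so there is no internal proof to compare against, and your argument is in any case quite different in method from de Bruijn's original one. Your architecture is sound and attractive: the identity $G_t(z)=\frac{1}{\sqrt{4\pi t}}\int_{-\infty}^{\infty}G(z+iv)e^{-v^2/(4t)}\,dv$ is legitimate under the hypothesis $b>2$ (Fubini works because $\int|\phi(u)|e^{|uv|}\,du\ll e^{C|v|^{b/(b-1)}}$ with $b/(b-1)<2$), the intertwining identity for $e^{az}$ with $a$ real is correct and does shift zeros only horizontally, and the core computation — $\rho'=2\sum_\sigma m_\sigma/(\rho-\sigma)$ under $\partial_tP_t=-\partial_z^2P_t$, the conjugate zero contributing exactly $-1/\eta$ and all others a nonpositive amount, hence $\frac{d}{dt}(\eta^2+2t)\le0$ — is exactly the repulsion mechanism (as in Csordas--Smith--Varga and Rodgers--Tao) and gives the sharp strip $\Delta^2-2t$.

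Two steps need repair. The serious one is the domination used to pass to the limit: the inequality $|f_N(z+iv)|\le|G(z+iv)|\exp(C\eps_N v^2+C_z)$ is false, because at and near a zero $\rho_k$ with $k>N$ the right-hand side is (nearly) zero while $f_N$ is not; equivalently, the lower bound $\prod_{k>N}|E_1(w/\rho_k)|\ge\exp(-C\eps_N|w|^2-\cdots)$ only holds while $|w|$ stays below the moduli of the omitted zeros, since $\log|E_1(w)|\to-\infty$ at $w=1$. The fix is standard and does not involve $G$ at all: from $\log|E_1(w)|\ll|w|^2/(1+|w|)$ and the fact that the zeros of $G$ have convergence exponent at most $b/(b-1)<2$, one gets $|f_N(z)|\le\exp\bigl(|\alpha||z|+C(1+|z|)^{\beta}\bigr)$ for some fixed $\beta<2$ with $C$ independent of $N$ (the bound is monotone in the zero set, so truncation only helps); this is an integrable dominating function against $e^{-v^2/(4t)}$, and with $f_N\to G$ pointwise, dominated convergence gives $e^{-tD^2}f_N\to G_t$ locally uniformly, after which Hurwitz finishes as you say. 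The lesser issue is the polynomial flow at multiple zeros and tied maxima: colliding zeros branch like square roots, so ``the same computation applied to the one-sided derivative'' is not available there. Instead, reduce to initial polynomials with simple zeros (split multiple zeros horizontally inside the closed strip, preserving realness, and appeal to Hurwitz at the end); then the discriminant of $P_t$ is a polynomial in $t$ that is nonzero at $t=0$, so collisions occur at finitely many times, and on the complement the Dini-derivative inequality for $\eta$ (taking the maximum over tied simple zeros) integrates to the claimed monotonicity, with continuity of $\eta^2+2t$ bridging the exceptional times. With these two patches your proof is complete, and it is a genuinely different — and arguably more transparent — route than the cited one.
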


\begin{proof}[Proof of Theorem 1]
In order to apply Theorem \ref{debruijnthm}, it is convenient to define a rotated version of $\xi^F$. Let
\[
H(z) \coloneqq \xi^F\left(\frac{1+iz}{2}\right),
\]
so that values of $\xi^F$ on the critical line correspond to values of $H$ on the real line. In this rotated frame of reference, we would like to apply Theorem \ref{debruijnthm} with $H$ corresponding to the function $G$ in the statement of the theorem.

The functional equation \eqref{eq:fneq} implies that $H(x) =\overline{H(x)}$ for any real number $x$, so $H$ is real valued as a function on the real line. Also, $H(x)$ decays exponentially as $x \to\pm\infty$ because of the presence of the $\Gamma$ factors in $\gamma$, so $H$ is a Schwartz function and its Fourier transform is $\Phi_F$ by \eqref{eq:PhiFdef}. Fourier inversion gives,
\[
H(x) = \int_{-\infty}^{\infty} \Phi_F(u) e^{ixu}\, du.
\]

Since $H(x)$ is real valued for all $x\in\R$, its Fourier transform $\Phi_F$ has the conjugate symmetry property $\Phi_F(u)=\overline{\Phi_F(-u)}$. Hence, in order to apply Theorem \ref{debruijnthm} with $\phi \coloneqq \Phi_F$ it will suffice to verify that $\Phi_F$ has the necessary decay as $u \to +\infty$. The decay as $u \to -\infty$ then follows immediately by the symmetry of $\Phi_F$.

To bound $\Phi_F$ we begin by performing the substitution $w \coloneqq \frac{1+ix}{2}$ to rewrite \eqref{eq:PhiFdef} as a complex contour integral,
\[
\Phi_F(u) = \frac{e^u}{\pi i} \int_{\frac{1}{2}-\infty i}^{\frac{1}{2}+\infty i} \xi^F(w) e^{-2u w}\, dw.
\]

After expanding out the definition of $\xi^F$ in the integrand, we wish to interchange the infinite sum coming from the Dirichlet series $F$ with the integral. To justify this, one can first shift the vertical line contour to the right into the half-plane of absolute convergence of $F$ (which can be done because $\xi^F$ decays uniformly exponentially in any vertical strip), and on this new contour the interchange of sum and integral is valid. Hence,
\begin{equation} \label{eq:PhiFsum}
\Phi_F(u) = \frac{\alpha e^u}{\pi i}\sum_{n=1}^{\infty} a_n \int_{2-\infty i}^{2+\infty i} w^m(w-1)^m \prod_{j=1}^{k}\Gamma(\omega_j w+ \mu_j) \left(\frac{n e^{2u}}{Q}\right)^{-w} \, dw.
\end{equation}

It is now helpful to view the above integral as an inverse Mellin transform. Letting $\Psi(w) \coloneqq w^m(w-1)^m \prod_{j=1}^{k}\Gamma(\omega_j w+ \mu_j)$, define $\psi \colon (0,\infty) \to \C$ to be the inverse Mellin transform,
\[
\psi(v) \coloneqq \frac{1}{2\pi i}\int_{2-\infty i}^{2+\infty i} \Psi(w) v^{-w}\, dw.
\]
Then \eqref{eq:PhiFsum} can be rewritten as
\begin{equation} \label{eq:PhiFsum2}
\Phi_F(u) = 2\alpha e^u \sum_{n=1}^{\infty} a_n \psi\left( \frac{n e^{2u}}{Q}\right).
\end{equation}

We can now bound $\psi$ using the following lemma whose proof can be found in Section~\ref{lemmasection}.
\begin{lemma} \label{Rlemma}
Let
\[
h(x) \coloneqq \frac{1}{2\pi i} \int_{1-\infty i}^{1+\infty i} \prod_{j=1}^{k} \Gamma(a_j w + b_j) x^{-w}\, dw
\]
for some $a_j > 0$ and $b_j\in \C$ with $\Re b_j \geq 0$. Then there exists a $\delta>0$ (depending on the $a_j$ and $b_j$ values) such that $h(x) \ll e^{-x^{\delta}}$ for all $x \geq 1$.
\end{lemma}

The function $\psi$ is not quite of the same form as $h$ in the lemma, but by expanding out the polynomial factors in the definition of $\Psi$ and then repeatedly applying the relation
\[
w \Gamma(a w +b) = \frac{1}{a} \Gamma(a w + b+1)-\frac{b}{a}\Gamma(aw+b).
\]
one can write $\psi$ as a linear combination functions to which the lemma does apply. Hence, we can conclude that there exists a $\delta>0$ be such that $\psi(v) \ll e^{-v^{\delta}}$ for all $v \geq 1$. Applying this bound in \eqref{eq:PhiFsum2} we see that for all sufficiently large $u$,
\begin{align*}
\Phi_F(u) &\ll e^u \sum_{n=1}^{\infty} a_n e^{-\left(n^\delta e^{2 \delta u}/Q^\delta\right)} \\
&\ll e^u \sum_{n=1}^{\infty} e^{-n^\delta e^{2 \delta u}/\left(2 Q^\delta\right)}
\end{align*}
where the second inequality follows from the fact that $a_n =O(n^2)= e^{n^{o(1)}}$.

Note that for all $c \in [0, 1/2]$, say, we have a bound $\sum_{n=1}^{\infty} c^{n^\delta} \ll_\delta c$ which we can apply to the sum above to get
\[
\Phi_F(u) \ll e^u e^{-e^{2\delta u}/(2Q^\delta)} \ll e^{-e^{2\delta u}/(3Q^\delta)}
\]
for all $u$ large enough. Hence, $\Phi_F$ has rapid enough decay that Theorem~\ref{debruijnthm} applies.

We are now in a position to define the de Bruijn-Newman constant $\Lambda_F$ associated to $F$. For any $t \in \R$, let
\[
H_t(z) \coloneqq \int_{-\infty}^{\infty} e^{tu^2} \Phi_F(u) e^{izu}\, du
\]
be the functions which are relevant to the conclusions of Theorem \ref{debruijnthm}, and let
\[
\mathcal{Z} \coloneqq \{t \in \R \colon\text{ all the roots of $H_t$ are real}\}.
\]
The functions $\xi^F_t$ defined in \eqref{eq:xiFtdef} are simply the un-rotated versions of $H_t$, and so $\mathcal{Z}$ could just as well be defined as the set of $t$ for which all the zeros of $\xi^F_t$ lie on the critical line.

We now make some observations about the set $\mathcal{Z}$. First we note that $\mathcal{Z}$ is closed. To see this, suppose we have a sequence $\{t_n\} \subset \mathcal{Z}$ converging to a real number $t'$. One may verify (using the extremely rapid decay of $\Phi_F$) that the functions $H_{t_n}$ are entire, and that they converge uniformly on compact subsets to $H_{t'}$. Since the functions $H_{t_n}$ have only real roots, it follows from Hurwitz's theorem (as stated in \cite[Chap. 5 Thm. 2]{ahlfors}, for example) that $H_{t'}$ has only real roots or is identically zero. The latter is impossible because $H_{t'}$ is the Fourier transform of a function which is not identically zero. Hence $t' \in \mathcal{Z}$. So we have shown that $\mathcal{Z}$ is closed.

Next, we note that if $t_0 \in \mathcal{Z}$, then by applying Theorem~\ref{debruijnthm} with $\phi(u)\coloneqq e^{t_0 u^2} \Phi_F(u)$, we may conclude $t \in \mathcal{Z}$ for all $t \geq t_0$. Hence, the only possible choices for $\mathcal{Z}$ are the empty set, all of $\R$, or a half line $\{t \geq \Lambda_F\}$ where $\Lambda_F$ is some real number.

To see that $\mathcal{Z}$ is nonempty, Theorem~\ref{debruijnthm} shows (by choosing $\phi \coloneqq \Phi_F$) that it suffices to check that $H$ has all of its zeros lying in some horizontal strip, or equivalently that $\xi^F$ has all of its zeros in some vertical strip. It is a general fact that any convergent Dirichlet series $F$ has a zero-free half plane because if $s$ has large enough real part, then the first term of $F(s)$ will strictly dominate the sum of all the other terms. This implies $\xi^F$ also has a zero-free half plane, and so by the functional equation for $\xi^F$ all the zeros of $\xi^F$ must lie in a vertical strip.

The fact that $\mathcal{Z}\neq \R$ will be a consequence of the generalized Newman's conjecture which we prove in the next section. Once this is proved it follows that there is a generalized de Bruijn-Newman constant $\Lambda_F$ associated to $F$ for which $\mathcal{Z}=\{t \geq \Lambda_F\}$. The constant $\Lambda_F$ is the unique real number for which $\xi^F_t$ has all of its zeros on the critical line if and only if $t \geq \Lambda_F$.
\end{proof}

\section{Proof of the Generalized Newman's Conjecture}
To prove $\Lambda_F \geq 0$, we will take the direct approach of showing that for every $t < 0$ the function $\xi^F_t$ has zeros off the critical line. As a starting point, we first rewrite our defintion of $\xi^F_t$ for all $t <0$ in terms of a new integral which is essentially a convolution of $\xi^F$ with a Gaussian whose variance is proportional to $|t|$. 

Inserting the definition \eqref{eq:PhiFdef} of $\Phi_F$ into the definition \eqref{eq:xiFtdef} of $\xi^F_t$ and then applying Fubini's theorem (which can be justified for any $t < 0$) gives
\begin{align*}
\xi^F_t\left(\frac{1+iz}{2}\right)
&= \frac{1}{2\pi} \int_{-\infty}^{\infty} \xi^F\left(\frac{1+ix}{2}\right) \int_{-\infty}^{\infty} e^{tu^2} e^{i(z-x)u}\, du\, dx \\
&= \frac{1}{2\sqrt{\pi |t|}} \int_{-\infty}^{\infty} \xi^F\left(\frac{1+ix}{2}\right) e^{\frac{1}{4t}(z-x)^2}\, dx.
\end{align*}
Substituting $s\coloneqq\frac{1+iz}{2}$ and $w\coloneqq\frac{1+ix}{2}$ this becomes
\begin{equation*}
\xi^F_t(s) = \frac{1}{i \sqrt{\pi |t|}} \int_{\frac{1}{2}-\infty i}^{\frac{1}{2}+\infty i} \xi^F(w) e^{\frac{1}{|t|}(s-w)^2}\, dw.
\end{equation*}

Because it will be convenient later on, we will shift the contour in the above integral to the vertical line $\Re w = 2$ where the Dirichlet series for $F$ converges absolutely. Since $\xi^F(s)$ has no poles and decays exponentially in any vertical strip, this shift will not affect the value of the integral. Hence,
\begin{equation} \label{eq:newxiFtdef}
\xi^F_t(s) = \frac{1}{i \sqrt{\pi |t|}} \int_{2-\infty i}^{2+\infty i} \xi^F(w) e^{\frac{1}{|t|}(s-w)^2}\, dw.
\end{equation}
We will make use of this new expression for $\xi^F_t$ in the proof of the next theorem, which will be our main tool for proving the generalized Newman's conjecture.

\begin{theorem} \label{mainthm}
Let $s = x+iy$ with $x\in \R$, $y > 0$, and let $t < 0$. Suppose $|t| \leq C$ and $|x| \leq Cy^{1/4}$ for some positive constant $C$. Then for all $y$ sufficiently large (depending on $C$) the following estimate holds,
\begin{equation} \label{eq:xiFtapprox}
\xi^F_t(J_t(s)) =  \gamma_t(s)\left(F_t(s) +O\left(y^{-1/5} \exp\left(\frac{10}{|t|}\min(x,-2)^2\right)\right)\right)\end{equation}
where
\begin{align} \label{eq:Ftdef}
F_t(s) &\coloneqq \sum_{n=1}^\infty \exp\left(-\frac{|t|}{4}\log^2 n\right) \frac{a_n}{n^s}, \\
J_t(s) &\coloneqq s + \frac{|t|}{2}\log Q + \frac{|t|}{2}\sum_{i=1}^k \omega_i \Log(\omega_i s), \label{eq:Jtdef} \\
\gamma_t(s) &\coloneqq \gamma(s) \exp\left(\frac{1}{|t|}(s-J_t(s))^2\right)
\end{align}
and where the implicit constant in the error may depend on $C$. 
\end{theorem}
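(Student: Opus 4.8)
The plan is to start from the convolution representation \eqref{eq:newxiFtdef} for $\xi^F_t$, expand $\xi^F(w)=\gamma(w)F(w)$, and interchange the (absolutely convergent, on $\Re w = 2$) Dirichlet series of $F$ with the Gaussian integral, so that
\[
\xi^F_t(\sigma) = \frac{\alpha}{i\sqrt{\pi|t|}}\sum_{n=1}^\infty a_n \int_{2-\infty i}^{2+\infty i} \Psi(w)\,(n/Q)^{-w}\,e^{\frac{1}{|t|}(\sigma-w)^2}\,dw,
\]
where $\Psi(w) = w^m(w-1)^m\prod_j\Gamma(\omega_j w+\mu_j)$ and $\sigma = J_t(s)$. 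The guiding heuristic is that for each fixed $n$ the integrand, after writing $(n/Q)^{-w} = \exp(-w\log(n/Q))$ and completing the square against the Gaussian, is concentrated near a saddle point $w_n$, and one should choose $J_t$ precisely so that, for the dominant term $n=1$, this saddle sits at $\Re w = 2$ (i.e. deep in the region of absolute convergence) while the Gaussian contributes the factor $\exp(\frac{1}{|t|}(s-J_t(s))^2)$ that appears in $\gamma_t$. More precisely, the substitution $w \mapsto w + \text{(something)}$ and a stationary-phase/Laplace analysis of $\int \Psi(w) e^{\frac{1}{|t|}(\sigma - w)^2 - w\log(n/Q)}\,dw$ should reproduce, up to the claimed error, the value $\gamma(s)\exp(\frac{1}{|t|}(s-J_t(s))^2)\exp(-\frac{|t|}{4}\log^2 n) n^{-s}$, using Stirling for $\Psi$ near the saddle and Lemma~\ref{gammasizelemma} to control the tails. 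Summing over $n$ then gives $\gamma_t(s)F_t(s)$ plus the accumulated error.

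Concretely I would proceed in the following steps. First, fix $n$ and analyze $I_n := \frac{1}{i\sqrt{\pi|t|}}\int_{2-\infty i}^{2+\infty i}\gamma(w)(n/Q)^{-w} e^{\frac{1}{|t|}(\sigma-w)^2}\,dw$ where $\sigma = J_t(s)$. Write $\gamma(w) = \gamma(s) \cdot \frac{\gamma(w)}{\gamma(s)}$ and use Stirling's formula (in the form underlying Lemma~\ref{gammasizelemma}) to expand $\log\gamma(w) - \log\gamma(s)$ to first order around $w = s$; the linear term is $(\log\gamma)'(s)(w-s)$, and since $\gamma'/\gamma(s) = \frac{m}{s}+\frac{m}{s-1}+\log Q + \sum_i \omega_i\psi(\omega_i s + \mu_i) \approx \log Q + \sum_i \omega_i\log(\omega_i s)$ for large $|s|$, this is exactly $\frac{2}{|t|}(s - J_t(s))$ times $(w-s)$ by the definition \eqref{eq:Jtdef} of $J_t$ — in other words, the linear term in $\log\gamma$ combines with the shift $\sigma - w = (s-w) + (J_t(s)-s)$ inside the Gaussian to cancel the cross terms and leave $\exp(\frac{1}{|t|}(s-J_t(s))^2)$ out front and a clean Gaussian $\exp(\frac{1}{|t|}(s-w)^2)$ times $(n/Q)^{-w}$ inside. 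Second, evaluate the resulting Gaussian integral $\frac{1}{i\sqrt{\pi|t|}}\int \exp(\frac{1}{|t|}(s-w)^2) (n/Q)^{-w}\,dw$ exactly — it equals $(n/Q)^{-s}\exp(\frac{|t|}{4}\log^2(n/Q))$ after shifting the contour to pass through the saddle $w = s + \frac{|t|}{2}\log(n/Q)$ — and check that expanding $\log(n/Q)^2 = \log^2 n - 2\log n\log Q + \log^2 Q$ and absorbing the $\log Q$ pieces into the $J_t$ shift and into $\gamma_t$ leaves precisely $\exp(-\frac{|t|}{4}\log^2 n)n^{-s}$, i.e. the $n$-th term of $F_t(s)$ times $\gamma_t(s)$. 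Third, bound the error from truncating the Stirling expansion of $\gamma(w)/\gamma(s)$ at the linear term: on the contour through the saddle, $|w - s| \lesssim \sqrt{|t|\log n} + \sqrt{|t|}$ typically, the quadratic and higher terms in $\log\gamma$ are $O(|w-s|^2/|s|) = O(|t|(1+\log n)/y)$ (using $|x|\le Cy^{1/4}$ so $|s|\asymp y$), and the Gaussian weight controls the contribution; summing over $n$ with the factor $\exp(-\frac{|t|}{4}\log^2 n)|a_n|$ (rapidly convergent, $a_n = O(n^2)$) gives a total of size $O(y^{-1+\epsilon})$, comfortably inside $O(y^{-1/5})$. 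Fourth, handle the tails where $|\Im w|$ is large: there Lemma~\ref{gammasizelemma} gives $|\gamma(w)|\le \exp(-K|\Im w|)$ and the Gaussian $|e^{\frac{1}{|t|}(\sigma-w)^2}|$ decays like $\exp(-\frac{1}{|t|}(\Im(\sigma-w))^2)$ since $|t|\le C$, so these tails are exponentially small in $y$. The factor $\exp(\frac{10}{|t|}\min(x,-2)^2)$ in the error term reflects that when $x = \Re s$ is very negative, $\gamma_t(s)$ itself is large, so the error must be measured relative to it; this is tracked by noting $|\gamma(s)|$ grows at most like $\exp(O(|x|\log|s|))$ and $\exp(\frac{1}{|t|}(s-J_t(s))^2)$ contributes the $\exp(\frac{O(1)}{|t|}x^2)$-type factor, and carrying the dependence through the error estimate.

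The main obstacle I expect is making the saddle-point analysis of $I_n$ uniform in both $n$ and in the location of $s$ (with $|x|\le Cy^{1/4}$), including the contour deformation: one must shift the vertical contour from $\Re w = 2$ to the saddle line $\Re w = \Re s + \frac{|t|}{2}\Re\log(n/Q)$, which for small $n$ and $s$ in the left part of the strip crosses into regions where $\gamma$ has its trivial zeros or where the Dirichlet series no longer converges, so the interchange of sum and integral has to be organized carefully (e.g. do the sum-integral interchange first on $\Re w = 2$, then deform each $I_n$ individually, picking up no residues since $\gamma$ is entire and $\Psi$ has only zeros there). A secondary technical point is the passage from $\gamma$ to the explicit expression $\sum_i\omega_i\Log(\omega_i s)$ in $J_t$: the digamma function $\psi(\omega_i s+\mu_i)$ equals $\Log(\omega_i s) + O(1/|s|)$ only up to lower-order terms and a constant absorbed in $\log\alpha$/$\log Q$, and one has to verify these discrepancies feed into the error at the claimed level rather than distorting $J_t$ itself — this is where keeping the error relative to $\gamma_t(s)$, and the generous exponent $-1/5$ (versus the true $-1+\epsilon$), buys the needed slack.
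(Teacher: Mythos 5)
Your overall strategy is the same as the paper's: expand the Dirichlet series inside \eqref{eq:newxiFtdef}, treat each term $B_{t,n}$ by steepest descent through the saddle near $s+\tfrac{|t|}{2}\log n$, use a Stirling-type expansion of $\log\gamma$ about $s$ so that the definition of $J_t$ kills the linear term and produces $\gamma_t(s)\exp(-\tfrac{|t|}{4}\log^2 n)n^{-s}$, and control tails with Lemma \ref{gammasizelemma}. However, there are genuine gaps in the execution. The central one is your claim that the analysis can be run uniformly in $n$ with total error $O(y^{-1+\epsilon})$: your estimate rests on ``$|w-s|\lesssim\sqrt{|t|\log n}+\sqrt{|t|}$ typically,'' but the saddle displacement is $\tfrac{|t|}{2}\log(n/Q)$, linear in $\log n$, so the error from truncating $\log\gamma$ is of size $|t|^2\log^2 n/y$, not $|t|(1+\log n)/y$. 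This is harmless only while $\log n\ll y^{1/3}/|t|$; once $\log n$ is of order $y^{2/3}$ or larger the Taylor/Stirling expansion about $s$ (which requires $|w-s|\ll|s|^{2/3}$, cf.\ Lemma \ref{gapproxlemma}) is simply invalid, and the saddle line sits far in the right half-plane where Lemma \ref{gammasizelemma} does not apply and $\gamma$ grows. These large-$n$ terms cannot be dismissed as ``tails where $|\Im w|$ is large,'' nor bounded trivially on $\Re w=2$ (there $|a_n n^{-w}|\asymp 1$ and the sum over $n$ diverges). The paper is forced to split into three ranges (Lemma \ref{mainlemma}): an asymptotic only for $\log n\le y^{1/3}/|t|$, a one-sided bound for medium $n$, and for large $n$ a separate argument on the $n$-dependent vertical line $\Re z=\Re(J_t(s)+\tfrac{|t|}{2}\log n)$ using the crude bound $|\gamma(z)|\le\exp(K(\Re z)^{1.1})$ of Lemma \ref{gboundlemma}, yielding $O(\exp(-\tfrac{|t|}{10}\log^2 n))$; one then needs the lower bound $|\gamma_t(s)|\gg e^{-K'y}$ to absorb these additive contributions into the multiplicative error. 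Your proposal contains no substitute for this large-$n$ mechanism, and without it the summation step fails.

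Two further points need repair. First, for $x<0$ you cannot shift the entire vertical line to the saddle abscissa: $\gamma$ is \emph{not} entire (the $\Gamma$ factors have poles in the left half-plane, only of bounded imaginary part), so your parenthetical justification for picking up no residues is wrong as stated; the paper instead deforms only a window of height $\asymp y^{2/3}$ around $\Im z=y$ (the dog-leg contour $V_1,H_1,M,H_2,V_2$), which stays clear of all poles, and bounds the horizontal connectors with Lemma \ref{integrandboundlemma}. (Alternatively one can check that any residues crossed carry a factor $\exp(-y^2/(2|t|))$ and are negligible, but this must be argued, not asserted via entirety of $\gamma$.) Second, a small computational slip: the exact Gaussian evaluation gives $(n/Q)^{-s}\exp\left(-\tfrac{|t|}{4}\log^2(n/Q)\right)$, with a minus sign, and the bookkeeping that converts $\log^2(n/Q)$ into $\log^2 n$ plus terms absorbed by the $\tfrac{|t|}{2}\log Q$ in $J_t$ and by $\gamma_t$ should be carried out explicitly, since this is exactly where the $Q^w$ factor's interaction with $J_t$ is verified. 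With the range-splitting, the corrected saddle location, and the contour handled as above, your outline becomes essentially the paper's proof.
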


One should think of Theorem \ref{mainthm} as showing that $\xi^F_t$ is analogous to $\xi^F$ in that it can be expressed (approximately) as the product of a Dirichlet series and some special additional factors. We stress that the error term in \eqref{eq:xiFtapprox} is not optimal, but for our application to Newman's conjecture, we only require the following qualitative version of the theorem: if $t<0$, $V$ is some vertical strip, and $s=x+iy \in V$ with $y$ sufficiently large, then
\begin{equation}
\xi^F_t(J_t(s)) =  \gamma_t(s)\left(F_t(s) + o_{y \to \infty}(1)\right)
\end{equation}
where the decay of the error term is uniform in $x$ (but not necessarily in $t$). 

The appearance of the Dirichlet series $F_t$ in the theorem can be explained heuristically from \eqref{eq:newxiFtdef} as follows. Suppose one could pull the $\gamma$ factor of $\xi^F(w) = \gamma(w)F(w)$ outside of the integral in \eqref{eq:newxiFtdef}, leaving an integral of the form
\[
\frac{1}{i\sqrt{\pi |t|}} \int_{2-\infty i}^{2+\infty i} F(w) e^{\frac{1}{|t|}(s-w)^2}\, dw.
\]
Interchanging the sum and the integral and then computing the result gives,
\begin{align*}
\sum_{n=1}^{\infty} a_n \frac{1}{i\sqrt{\pi |t|}} \int_{2-\infty i}^{2+\infty i} n^{-w} e^{\frac{1}{|t|}(s-w)^2}\, dw
&= \sum_{n=1}^{\infty}a_n \exp\left(-\frac{|t|}{4}\log^2 n\right) n^{-s} \\
&= F_t(s).
\end{align*}

Further explanation of Theorem \ref{mainthm} including an explanation for the presence of the $J_t$ function and a full proof of theorem will be left for Section 4. We will now show how this theorem implies the generalized Newman's conjecture without much additional work. 

\subsection{Deducing Theorem \ref{newmansconj} from Theorem \ref{mainthm}}
Assume that $t < 0$ is fixed. Theorem~\ref{mainthm} suggests that if $F_t$ has a zero at $s_0$, then $\xi^F_t$ should have a zero near $J_t(s_0)$. In order to make this rigorous, we must somehow deal with the fact that there are error terms present in the correspondence between $F_t$ and $\xi^F_t$ given in the theorem. 

A crucial feature of $F_t$ that we will use is that it is everywhere absolutely convergent. To see this, recall that
\[
F_t(s) \coloneqq \sum_{n=1}^\infty \exp\left(-\frac{|t|}{4}\log^2 n\right) \frac{a_n}{n^s},
\]
and the factor $\exp\left(-\frac{|t|}{4}\log^2 n\right)$ decays faster than $O(n^{-d})$ for any $d$ whereas $a_n=O(n^2)$.

Because $F_t$ is everywhere absolutely convergent it is entire, and it has a property known as \textit{almost periodicity} which was introduced and studied extensively by Bohr \cite{bohr1922}. Roughly speaking, almost periodicity for $F_t$ means that for any vertical strip there is an ample supply of shifts $\tau\in\R$ for which $F_t(s)$ and $F(s + i \tau)$ are uniformly close.

A precise version of one of Bohr's results can be stated as follows,
\begin{theorem}[{\cite[Thm. 1]{bohr1922}}] \label{bohrthm}
Let $G(s) = \sum_{n=1}^{\infty} b_n n^{-s}$ be a Dirichlet series which is absolutely convergent for all $\Re s > \sigma$. For any $\eps > 0$ and $\alpha, \beta \in \R$ such that $\sigma < \alpha < \beta$, there exists a sequence of shifts 
\[
0 < \tau_1 < \tau_2 < \cdots
\]
satisfying
\[
\liminf_{m \to \infty} (\tau_{m+1}-\tau_m) > 0 \text{\quad and \quad } \limsup_{m \to \infty} \frac{\tau_m}{m} < \infty
\]
such that
\[
\left| G(s) - G(s+i \tau_m) \right | < \eps
\]
for all $\alpha \leq \Re s \leq \beta$.
\end{theorem}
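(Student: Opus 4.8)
The plan is the classical simultaneous Diophantine approximation argument underlying Bohr's theory of almost periodic functions: use absolute convergence to reduce to a finite Dirichlet polynomial, and then use the pigeonhole principle to produce shifts $\tau$ for which all of the finitely many frequencies $\log 2, \dots, \log N$ are nearly periodic at once.

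First I would fix $\eps > 0$ and $\sigma < \alpha < \beta$ and truncate. Since $\sum_n |b_n| n^{-\alpha} < \infty$, choose $N$ with $\sum_{n > N} |b_n| n^{-\alpha} < \eps/3$; because $\Re(s + i\tau) = \Re s \ge \alpha$, the tails of both $G(s)$ and $G(s + i\tau)$ beyond $N$ are then $< \eps/3$, uniformly in $\tau$ and in $s$ with $\alpha \le \Re s \le \beta$. It remains to control $\bigl|\sum_{n \le N} b_n n^{-s}(1 - n^{-i\tau})\bigr|$. The $n = 1$ term vanishes, and $|1 - n^{-i\tau}| = 2\bigl|\sin(\tfrac12 \tau \log n)\bigr| < 2\pi\eta$ whenever $\tfrac{\tau \log n}{2\pi}$ lies within distance $\eta$ of $\mathbb{Z}$; hence this sum is at most $2\pi\eta \sum_{n \le N} |b_n| n^{-\alpha}$, uniformly for $\Re s \ge \alpha$, as soon as $\tau$ satisfies $\operatorname{dist}\bigl(\tfrac{\tau \log n}{2\pi}, \mathbb{Z}\bigr) < \eta$ for every $2 \le n \le N$. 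Taking $\eta$ small enough (in terms of $\eps$, $N$, and $G$) makes this $< \eps/3$, so the whole statement reduces to showing that the set
\[
A = \Bigl\{ \tau \in \R : \operatorname{dist}\bigl(\tfrac{\tau \log n}{2\pi}, \mathbb{Z}\bigr) < \eta \ \text{ for } 2 \le n \le N \Bigr\}
\]
contains an increasing sequence $\tau_1 < \tau_2 < \cdots$ with $\liminf_m (\tau_{m+1} - \tau_m) > 0$ and $\limsup_m \tau_m / m < \infty$.

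The crucial point is that $A$ is relatively dense in $\R$. I would obtain this by a compactness argument. Put $\theta_n = \tfrac{\log n}{2\pi}$ and consider the continuous homomorphism $f \colon \R \to \mathbb{T}^{N-1}$, $f(\tau) = (\tau\theta_2, \dots, \tau\theta_N) \bmod 1$, so $A = f^{-1}(U)$ with $U$ the open $\eta$-cube about the origin in the sup-metric. Its closure $H = \overline{f(\R)}$ is compact and contains $0 = f(0)$, so it can be covered by finitely many balls $B_{\eta/2}(f(c_1)), \dots, B_{\eta/2}(f(c_r))$ with $c_j \in \R$; then for each $a \in \R$ there is a $j$ with $f(a) \in B_{\eta/2}(f(c_j))$, and since $f$ is a homomorphism $f(a - c_j) = f(a) - f(c_j)$ lies within sup-distance $\eta/2$ of the origin, i.e.\ $a - c_j \in A$ while $|a - (a - c_j)| = |c_j| \le \max_j |c_j|$. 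Thus every real number lies within $\max_j |c_j|$ of $A$, so every interval of length $L := 2\max_j |c_j|$ meets $A$. (Equivalently one may run a direct box-counting pigeonhole on the torus; this is the standard Kronecker-type input.) Given this, I would extract the sequence greedily: take $\tau_1 \in A \cap [1, 1 + L]$ and, inductively, $\tau_{m+1} \in A \cap [\tau_m + 1, \tau_m + 1 + L]$, so $1 \le \tau_{m+1} - \tau_m \le 1 + L$; hence $\liminf_m (\tau_{m+1} - \tau_m) \ge 1 > 0$ and $\tau_m \le \tau_1 + (m-1)(1 + L)$, giving $\limsup_m \tau_m / m \le 1 + L < \infty$. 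By the truncation estimate, each such $\tau_m$ satisfies $|G(s) - G(s + i\tau_m)| < \eps$ for $\alpha \le \Re s \le \beta$, which is the claim.

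The only genuine obstacle is the relative density of $A$; everything else is routine estimation with an absolutely convergent series. That density statement is classical — essentially Kronecker's theorem, or the assertion that additive characters of $\R$ are almost periodic — so no real difficulty is expected, but it is the one step where an actual idea (pigeonhole on a torus, or compactness of the closed one-parameter subgroup generated by $(\theta_2, \dots, \theta_N)$) is required.
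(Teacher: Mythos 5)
Your proof is correct, and there is nothing in the paper to compare it against: the paper does not prove this statement at all, but quotes it as Theorem 1 of Bohr's 1922 paper and uses it as a black box. Your argument is essentially the classical one underlying Bohr's result. The two steps are both sound: (a) absolute convergence at $\Re s=\alpha$ lets you truncate to $n\leq N$ with both tails $<\eps/3$ uniformly in $\tau$ and in $\Re s\geq\alpha$, and the finite part is controlled by $2\pi\eta\sum_{n\leq N}|b_n|n^{-\alpha}$ once $\operatorname{dist}\left(\tau\log n/2\pi,\mathbb{Z}\right)<\eta$ for $2\leq n\leq N$ (to invoke $2|\sin(\pi\delta)|\leq 2\pi|\delta|$ you should say explicitly that $\eta\leq 1/2$, but that is cosmetic); (b) the relative density of the set $A$ of such $\tau$ via compactness of $H=\overline{f(\R)}\subset\mathbb{T}^{N-1}$, a finite cover by $\eta/2$-balls centered at orbit points $f(c_j)$, and translation invariance of the sup-metric is a correct and clean substitute for the usual box-pigeonhole/Kronecker input; it gives that every interval of length $L=2\max_j|c_j|$ meets $A$, and your greedy selection then produces gaps in $[1,1+L]$, which yields both $\liminf_m(\tau_{m+1}-\tau_m)\geq 1$ and $\limsup_m \tau_m/m\leq 1+L$. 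So your writeup supplies a complete, self-contained proof of a result the paper only cites; the only items to tighten in a final version are the $\eta\leq 1/2$ normalization and a one-line remark that the torus metric is translation invariant (so $d(f(a),f(c_j))=d(f(a-c_j),0)$), both of which you implicitly use.
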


Because $F_t$ is everywhere absolutely convergent, the above theorem holds for $F_t$ for any choice of $\alpha < \beta$. 

In order to proceed with the proof, we suppose for the moment that one is able to locate a single zero $s_0$ of $F_t$. Let $C$ be a circle centered at $s_0$ of radius $r$ chosen so that $C$ does not pass through any other zeros of $F_t$, and let
\[
\delta \coloneqq \min_{s \in C} |F_t(s)| > 0.
\]

Let $V$ denote the vertical strip $\left\{|\Re s - \Re s_0 | \leq r\right\}$ and recall that the qualitative form of Theorem \ref{mainthm} states
\begin{equation} \label{eq:xiFtapproxV}
\xi^F_t(J_t(s)) = \gamma_t(s) \left( F_t(s)+ o_{y\to\infty}(1) \right)
\end{equation}
for $s=x+iy\in V$, $y$ sufficiently large, where the decay of the error term is uniform in $x$.

Let
\[
h(s) \coloneqq \xi^F_t(J_t(s))/\gamma_t(s),
\]
and note that $h$ is analytic in the upper half plane, and
\begin{equation} \label{eq:happrox}
h(s) = F_t(s)+o_{y\to\infty}(1)
\end{equation}
for $s=x+iy \in V$, $y>0$ by \eqref{eq:xiFtapproxV}.

Applying Theorem \ref{bohrthm} to $F_t$ on the strip $V$ and taking $\eps \coloneqq \delta/3$, let $0 < \tau_1 < \tau_2 < \ldots$ be the resulting sequence of shifts.

By picking $\tau_m$ which is sufficiently large, one can ensure that for any $s$ on the circle $C$ the shift $s+i \tau_m$ will have a large enough imaginary part that the error term in \eqref{eq:happrox} when evaluating $h(s+i \tau_m)$ is uniformly less than $\delta/3$. Consequently, for all $s \in C$ we have
\[
|h(s+i\tau_m) - F_t(s)| \leq |F_t(s+i\tau_m) - F_t(s)|+\frac{\delta}{3} \leq \frac{2\delta}{3} < |F_t(s)|,
\]
so by Rouch\'{e}'s theorem $h$ has a zero inside the shifted circle $C+i \tau_m$.

By taking larger and larger shifts $\tau_m$, the argument above yields an infinite collection of zeros of $h$ which are arbitrarily high up the strip $V$. From the definition of $h$ and the fact that poles of $\gamma_t$ only exist up to some bounded height, we can deduce that $\xi^F_t(J_t(s))$ also has an infinite collection of zeros and these zeros exist arbitrarily high up the strip $V$. Hence, $\xi^F_t$ has infinitely many zeros within the curved region $J_t(V)$ at arbitrarily high heights. Past a certain height $J_t(V)$ lies completely to the right of the critical line so in particular we have shown that $\xi^F_t$ has zeros off the critical line.

This completes the deduction of the generalized Newman's conjecture from Theorem \ref{mainthm} and the assumption that $F_t$ has at least one zero. We will assert this latter fact as a lemma for now, and the proof will be given in Section \ref{lemmasection}.
\begin{lemma} \label{Ftzerolemma}
For any $t < 0$, $F_t$ has a zero.
\end{lemma}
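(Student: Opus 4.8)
The plan is to argue by contradiction, exploiting the fact that $F_t$ is an everywhere absolutely convergent Dirichlet series of finite order. So suppose $F_t$ has no zeros at all. I would first record that $F_t$ is entire of order at most $2$: using $|a_n| = O(n^2)$ and splitting the Gaussian weight $\exp(-\frac{|t|}{4}\log^2 n)$ into two equal halves,
\begin{align*}
|F_t(s)| &\le C\sum_{n=1}^\infty \exp\left(-\frac{|t|}{4}\log^2 n\right) n^{2-\Re s} \\
&\le C\left(\sum_{n=1}^\infty \exp\left(-\frac{|t|}{8}\log^2 n\right)\right)\exp\left(\frac{2}{|t|}\max(2-\Re s,0)^2\right),
\end{align*}
since $-\frac{|t|}{8}\log^2 n + (2-\Re s)\log n \le \frac{2}{|t|}\max(2-\Re s,0)^2$; as $\max(2-\Re s,0)^2 \le (2+|s|)^2$ this gives $|F_t(s)| \le \exp\left(O_t(|s|^2)\right)$. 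An entire function that is zero-free and of finite order is the exponential of a polynomial (Hadamard's factorization theorem), so $F_t(s) = e^{P(s)}$ for some polynomial $P$.

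The key step is then to force $P$ to be constant using two elementary features of the Dirichlet series $F_t$. First, since $\sum_n |a_n|\exp(-\frac{|t|}{4}\log^2 n)\,n^{-\sigma_0}$ is finite for every $\sigma_0$, the series $F_t$ is bounded on each right half-plane $\{\Re s \ge \sigma_0\}$; in particular $\Re P$ is bounded above on $\{\Re s \ge 2\}$. A polynomial whose real part is bounded above on a half-plane must be affine — if $\deg P \ge 2$ one finds a ray heading into the half-plane along which $\Re P \to +\infty$ — so $P(s) = bs+c$, and boundedness of $\Re(bs) = (\Re b)\Re s - (\Im b)\Im s$ over $\{\Re s \ge 2,\ \Im s\in\R\}$ forces $b$ to be a non-positive real. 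Second, the $n=1$ term of $F_t$ equals $a_1$ (because $\exp(-\frac{|t|}{4}\log^2 1)=1$), so $F_t(\sigma)\to a_1$ as $\sigma\to+\infty$ along the real axis, and since $a_1\neq 0$ this is incompatible with $b<0$. Hence $b=0$ and $F_t\equiv e^c$ is constant.

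This contradicts the non-constancy of $F_t$ — its coefficients $\exp(-\frac{|t|}{4}\log^2 n)\,a_n$ cannot vanish for all $n\ge 2$ unless $F$ is a monomial, a degenerate case in which conditions (i)--(iii) already force $\xi^F$, hence $\xi^F_t$, to be a nonzero constant, and which we exclude. Therefore $F_t$ has a zero. I do not anticipate a genuine obstacle in carrying this out: the only computation with any content is the (routine) verification that $F_t$ has finite order — order at most $2$, by the estimate above — after which Hadamard's theorem together with the elementary growth and boundary behaviour of an absolutely convergent Dirichlet series finishes the job. The one point worth a sentence of comment is the degenerate constant case, which has to be set aside for the statement to make sense at all.
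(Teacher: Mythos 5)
Your route is essentially the paper's: establish that $F_t$ is entire of order at most $2$, apply Hadamard factorization to a hypothetical zero-free $F_t$ to get $F_t=e^{P}$ with $\deg P\le 2$, use boundedness of the everywhere absolutely convergent series on right half-planes to force $P(s)=bs+c$ with $b$ a non-positive real, and then contradict the Dirichlet-series structure. The one step that is not justified at the stated level of generality is your exclusion of $b<0$ via ``$a_1\neq 0$''. Nothing in conditions (i)--(iii) of $\mathcal{S}^\sharp$ forces $a_1\neq 0$: the class has no Euler product and no normalization of coefficients (for instance $2^{-s}L(s,\chi)$ belongs to $\mathcal{S}^\sharp$ after absorbing the factor $2^{s}$ into $Q^{s}$), and if $a_1=0$ then $F_t(\sigma)\to 0$ as $\sigma\to+\infty$, which is perfectly compatible with $e^{b\sigma+c}$, $b<0$. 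The paper avoids this by arguing in one stroke: $e^{bs+c}=e^{c}\,(e^{-b})^{-s}$ is a one-term generalized Dirichlet series, so equating it with $F_t(s)=\sum_n \exp(-\frac{|t|}{4}\log^2 n)\,a_n n^{-s}$ and invoking uniqueness of coefficients of generalized Dirichlet series rules it out. Your argument is repaired the same way (or by comparing asymptotics against the first nonzero coefficient $a_{n_1}$ rather than $a_1$, which forces $b=-\log n_1$ and then $F_t(s)\equiv c_{n_1}n_1^{-s}$ identically); either way, both of your cases $b<0$ and $b=0$ can only occur when $F$ itself is a single monomial $a_{n_0}n_0^{-s}$.

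Thus everything funnels into the degenerate monomial/constant case, which you set aside explicitly; note that the paper's proof sets it aside silently, since its uniqueness-of-coefficients step likewise tacitly assumes $F$ is not itself a single term. Your parenthetical justification (that for monomial $F$ the conditions force $\xi^F$ to be a nonzero constant) is stated without proof but is in fact checkable: the functional equation together with the entirety of $\xi^F$ excludes any $\Gamma$ factors and forces $Q=n_0$, and such degree-zero $F$ fall outside the whole construction because $\Phi_F$ is then not defined. In summary: same approach as the paper, correct in substance, with one fixable gap (the unjustified $a_1\neq 0$) whose natural repair is exactly the paper's appeal to uniqueness of generalized Dirichlet series.
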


\section{Proof of Theorem \ref{mainthm}}
We are interested in estimating $\xi^F_t(J_t(s))$, so by \eqref{eq:newxiFtdef} we know
\[
\xi^F_t(J_t(s)) = \frac{1}{i \sqrt{\pi |t|}} \int_{2-\infty i}^{2+\infty i} \xi^F(z) e^{\frac{1}{|t|}(J_t(s)-z)^2}\, dz
\]
for all $t<0$. By inserting the definition $\xi^F(z) = \gamma(z) \sum_{n=1}^{\infty} a_n n^{-z}$ and then interchanging the sum and integral (which can be justified since the contour is in the half-plane of absolute convergence of the Dirichlet series $F$) we get
\[
\xi^F_t(J_t(s)) = \sum_{n=1}^{\infty} a_n B_{t,n}(s)
\]
where
\begin{equation} \label{eq:Btndef}
B_{t,n}(s) \coloneqq \frac{1}{i \sqrt{\pi |t|}} \int_{2-\infty i}^{2+\infty i} \gamma(z) e^{\frac{1}{|t|}(J_t(s)-z)^2} n^{-z}\, dz.
\end{equation}

Our goal is to estimate $B_{t,n}(s)$ in the regime of $s$ and $t$ values we are interested in. We will prove the following lemma:

\begin{lemma} \label{mainlemma}
Let $t$, $s=x+iy$, and $C$ satisfy the same restrictions as in Theorem \ref{mainthm}. Then for all $y$ sufficiently large (depending on $C$), the following estimates for $B_{t,n}(s)$ hold for small, medium, and large $n$ respectively (where the implicit constants may depend on $C$):
\begin{enumerate}[(i)]
\item For $1 \leq n \leq \exp(y^{1/3}/|t|)$,
\[
B_{t,n}(s) = \gamma_t(s) \exp\left(-\frac{|t|}{4}\log^2 n\right)n^{-s}(1+O(y^{-1/5})).
\]
\item For $1 \leq n \leq \exp(y^{3/5}/|t|)$,
\[
B_{t,n}(s) =O\left( |\gamma_t(s)| \exp\left(-\frac{|t|}{8}\log^2 n\right)n^{-x}\right).
\]
\item For $n > \exp(y^{3/5}/|t|)$,
\[
B_{t,n}(s) = O\left(\exp\left(-\frac{|t|}{10} \log^2 n\right)\right).
\]
\end{enumerate}
\end{lemma}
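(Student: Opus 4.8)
The plan is to evaluate each $B_{t,n}(s)$ by the Laplace method after completing the square. Writing $b_n\coloneqq J_t(s)+\tfrac{|t|}{2}\log n$, one has $e^{\frac{1}{|t|}(J_t(s)-z)^2}\,n^{-z}=e^{\frac{1}{|t|}(z-b_n)^2}\,n^{-J_t(s)}\,e^{-\frac{|t|}{4}\log^2 n}$, so \eqref{eq:Btndef} becomes
\[
B_{t,n}(s)=e^{-\frac{|t|}{4}\log^2 n}\,n^{-J_t(s)}\cdot\frac{1}{i\sqrt{\pi|t|}}\int_{2-\infty i}^{2+\infty i}\gamma(z)\,e^{\frac{1}{|t|}(z-b_n)^2}\,dz ,
\]
and the remaining integral is essentially a Gaussian of width $\asymp\sqrt{|t|}$ concentrated on the line $\Re z=\Re b_n$, so it is governed by the size of $\gamma$ near $b_n$. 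The key analytic input is Stirling's formula for the $\Gamma$-factors of $\gamma$: it gives $(\log\gamma)'(z)=\log Q+\sum_{i=1}^{k}\omega_i\Log(\omega_i z)+O(1/|z|)$, $(\log\gamma)''(z)=O(1/|z|)$, $(\log\gamma)'''(z)=O(1/|z|^2)$, and by the definition \eqref{eq:Jtdef} of $J_t$ the first of these reads $(\log\gamma)'(z)=\tfrac{2}{|t|}(J_t(z)-z)+O(1/|z|)$ — it is precisely this identity that makes the approximation in Theorem~\ref{mainthm} come out.

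First I would shift the contour in the displayed integral from $\Re z=2$ to $\Re z=\Re b_n$. Since $\Im b_n\asymp y$ is large, the only poles of $\gamma$ that can be crossed are the finitely many poles of the $\Gamma$-factors, which lie at bounded height; the factor $e^{\frac{1}{|t|}(z-b_n)^2}$ makes their residues $O(e^{-cy^2/|t|})$, hence negligible, and the horizontal connecting segments vanish in the limit by the Gaussian decay of that same factor. On the shifted line write $z=b_n+i\tau$, so that $e^{\frac{1}{|t|}(z-b_n)^2}=e^{-\tau^2/|t|}$, and expand $\log\gamma(b_n+i\tau)=\log\gamma(b_n)+i\tau(\log\gamma)'(b_n)-\tfrac{\tau^2}{2}(\log\gamma)''(b_n)+O(\tau^3/y^2)$. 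Using $(\log\gamma)''(b_n)=O(1/y)$ and $\int_{-\infty}^{\infty}e^{i\tau L-a\tau^2}\,d\tau=\sqrt{\pi/a}\,e^{-L^2/4a}$, the integral evaluates to
\[
\frac{1}{i\sqrt{\pi|t|}}\int_{2-\infty i}^{2+\infty i}\gamma(z)\,e^{\frac{1}{|t|}(z-b_n)^2}\,dz=\gamma(b_n)\exp\!\Bigl(-\tfrac{|t|}{4}\bigl((\log\gamma)'(b_n)\bigr)^2\Bigr)\bigl(1+O(|t|/y)\bigr) .
\]

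Next I would unwind this back to $s$. Set $D\coloneqq J_t(s)-s$, so that $|b_n-s|=O\bigl(|t|(\log y+\log n)\bigr)$. Since $\gamma$ varies slowly on that scale, the Stirling relation gives $(\log\gamma)'(b_n)=\tfrac{2}{|t|}D+o(1)$, hence $\exp(-\tfrac{|t|}{4}((\log\gamma)'(b_n))^2)=e^{-D^2/|t|}(1+o(1))$; and Taylor-expanding $\log\gamma(b_n)=\log\gamma(s)+(b_n-s)(\log\gamma)'(s)+\tfrac12(b_n-s)^2(\log\gamma)''(s)+\cdots$ with $(\log\gamma)'(s)=\tfrac{2}{|t|}D+O(1/y)$ and $(\log\gamma)''(s)=O(1/y)$ gives $\gamma(b_n)=\gamma(s)\exp(\tfrac{2}{|t|}D^2+D\log n)\bigl(1+O(|t|^2\log^2 n/y)\bigr)$. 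Multiplying these against $n^{-J_t(s)}=n^{-s}e^{-D\log n}$ and $e^{-\frac{|t|}{4}\log^2 n}$, the two $D\log n$ terms cancel and $\tfrac{2}{|t|}D^2-\tfrac{1}{|t|}D^2=\tfrac{1}{|t|}D^2$ remains, leaving
\[
B_{t,n}(s)=\gamma(s)\,e^{D^2/|t|}\,e^{-\frac{|t|}{4}\log^2 n}\,n^{-s}\bigl(1+O(|t|^2\log^2 n/y)\bigr)=\gamma_t(s)\exp(-\tfrac{|t|}{4}\log^2 n)\,n^{-s}\bigl(1+O(|t|^2\log^2 n/y)\bigr),
\]
using $\gamma_t(s)=\gamma(s)e^{D^2/|t|}$. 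For $n\le\exp(y^{1/3}/|t|)$ the error is $O(y^{-1/3})\subseteq O(y^{-1/5})$, which is part (i). For $n\le\exp(y^{3/5}/|t|)$ one keeps only absolute values: because $|t|^2\log^2 n/y=(8|t|/y)\cdot\tfrac{|t|}{8}\log^2 n\le\tfrac{|t|}{8}\log^2 n$ once $y$ is large, the multiplicative error $\ll\exp(\tfrac{|t|}{8}\log^2 n)$ is absorbed into the gap between $\exp(-\tfrac{|t|}{4}\log^2 n)$ and $\exp(-\tfrac{|t|}{8}\log^2 n)$, giving part (ii). For $n>\exp(y^{3/5}/|t|)$ the Taylor expansion of $\log\gamma$ is no longer uniformly valid, so here I would instead shift the contour in \eqref{eq:Btndef} directly to the line $\Re z=\tfrac{|t|}{5}\log n$ (which lies to the right of $2$ for $y$ large, so that no poles are met): on this line $|n^{-z}|=e^{-\frac{|t|}{5}\log^2 n}$ is super-exponentially small, while the Gaussian factor is $\ll\exp(\tfrac{|t|}{20}\log^2 n)$ for $y$ large and $\gamma$ is $\ll\exp(O(\log n\log\log n))$ by Stirling; since $\tfrac15-\tfrac1{20}=\tfrac3{20}>\tfrac1{10}$, this gives $|B_{t,n}(s)|\ll e^{-\frac{|t|}{10}\log^2 n}$, which is part (iii).

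The step I expect to be the main obstacle is making the unwinding in the previous paragraph rigorous and uniform. One must verify that the $D\log n$ and $\tfrac{1}{|t|}D^2$ contributions combine exactly as claimed, and one must keep the Stirling error terms for $\gamma$, $(\log\gamma)'$, $(\log\gamma)''$, and $(\log\gamma)'''$ under simultaneous control for all $1\le n\le\exp(y^{3/5}/|t|)$ and all $|x|\le Cy^{1/4}$ — in particular checking that $b_n$, together with the whole segment from $s$ to $b_n+i\tau$ over the range of $\tau$ that matters, stays inside a region $|\Re z|\ll|\Im z|^{\theta}$ (for some fixed $\theta<1$) covered by Lemma~\ref{gammasizelemma} and stays bounded away from the zeros of $\gamma$ at $0$ and $1$, so that $\log\gamma$ and its derivatives genuinely obey the asymptotics used above.
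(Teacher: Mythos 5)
Your overall strategy (complete the square in \eqref{eq:Btndef}, shift to a contour through a distinguished point, evaluate a Gaussian, then Taylor--expand $\log\gamma$ back to $s$ using $(\log\gamma)'(z)=\frac{2}{|t|}(J_t(z)-z)+O(1/|z|)$) is the same steepest-descent philosophy as the paper, your unwinding identity (the cancellation of the $D\log n$ terms and the surviving $D^2/|t|$) is correct, and your treatment of part (iii) is essentially the paper's. The genuine gap is in the choice of centre for parts (i)--(ii): $b_n=J_t(s)+\frac{|t|}{2}\log n$ is \emph{not} the saddle of the integrand. The saddle solves $(\log\gamma)'(z)-\frac{2}{|t|}(J_t(s)-z)-\log n=0$ and lies near $s+\frac{1}{2A}\log n$ with $A=\frac{1}{|t|}+\frac{\sum_j\omega_j}{2s}$, which is displaced from $b_n$ by $\approx\frac{|t|}{2}\left|(\log\gamma)'(b_n)\right|\asymp |t|\log y$, i.e.\ by $\asymp\sqrt{|t|}\,\log y$ Gaussian widths. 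On your line the integrand therefore carries the linear phase $e^{i\tau L}$ with $L=(\log\gamma)'(b_n)$, $\Re L\asymp\log y$, and your main term $\gamma(b_n)\exp\left(-\frac{|t|}{4}L^2\right)$ is smaller than the absolute-value size $\asymp\sqrt{|t|}\,|\gamma(b_n)|$ of the integral by a factor $\approx\exp\left(-\frac{|t|}{4}(\sum_i\omega_i)^2\log^2 y\right)=y^{-c|t|\log y}$, which is below every fixed power of $1/y$. Consequently every error you propose to bound crudely --- the Stirling remainder $O(1/|z|)$ in $\gamma$ (already of relative size $\gg 1/y$ against the trivial size of the integral), the cubic term $O(\tau^3/y^2)$, and the truncation of the line where the expansion about $b_n$ stops being valid --- exceeds the main term by a factor growing like $y^{c\log y}$; your display ``$=\gamma(b_n)\exp(-\frac{|t|}{4}L^2)(1+O(|t|/y))$'' cannot be reached by the triangle-inequality estimates you describe, only by proving cancellation in the error integrals as well. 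And the moment you complete the square in $i\tau L-a\tau^2$ to capture that cancellation, you are re-centred at the true saddle --- which is exactly the paper's contour: the short segment $M$ through $s+\frac{1}{2A}\log n$ flanked by $H_1,H_2,V_1,V_2$, with Lemma~\ref{gapproxlemma} applied around $s$ and Lemma~\ref{integrandboundlemma} for the flanks, where the Gaussian $e^{-Au^2}$ is non-oscillatory and crude bounds suffice. That choice of centre is the whole point of the argument, not a cosmetic difference.

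Two smaller issues. First, shifting to the full vertical line $\Re z=\Re b_n$ is delicate when $x$ is negative and $n$ is small: $\Re b_n=x+O(\log y)+\frac{|t|}{2}\log n$ can be $\approx -Cy^{1/4}$, and the line then runs through the half-plane containing all the poles of the $\Gamma$-factors (which have $\Re z\le 0$ and bounded imaginary part); the crossed residues are indeed negligible as you say, but the line itself may pass arbitrarily close to (or through) a pole near $\tau\approx-\Im b_n$, so one must indent or perturb --- the paper sidesteps this by never leaving $\Re z=2$ except in a window of height $O(y^{2/3})$ around $\Im z=y$. Second, your stated multiplicative error $O(|t|^2\log^2 n/y)$ silently drops the second-order term $\frac12(b_n-s)^2(\log\gamma)''(s)$ and the analogous $\log^2 y$ contributions; the honest form is a factor $\exp\left(O\left(|t|^2(\log n+\log y)^2/y\right)\right)$, whose modulus is close to $1$ but whose argument is as large as $y^{1/5}$ in the medium range, so for (ii) the bookkeeping must be done in the exponent and then absolute values taken (this is what the paper does by keeping $\frac{1}{4A}\log^2 n$ and using only $\Re(1/A)\ge|t|/2$); with that correction your absorption into $\exp(-\frac{|t|}{8}\log^2 n)$ does work.
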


Before giving the proof we will summarize our method. Looking at the integral in \eqref{eq:Btndef}, one may note that the integrand exhibits very rapid decay along the vertical line contour. Hence, a reasonable approach to estimate $B_{t,n}(s)$ is to split the contour into a finite segment where the mass of the integrand is concentrated, and a tail whose size can be crudely bounded. It then remains to estimate the integral over the finite segment. Unfortunately, this problem is still nontrivial because the integrand can be quite oscillatory on this segment. One way to proceed is to use the method of stationary phase to estimate the resulting oscillatory integral (e.g. see \cite[Ch. 4]{titchmarsh1986} for useful lemmas in this direction). We will instead use the complex analytic analogue of this method known as the method of steepest descent. The basic principle is to start by shifting the contour so that the mass of the integrand is concentrated somewhere where the integrand is not so oscillatory. Upon doing so, one may then estimate the resulting integral and get good error terms without needing to be too sophisticated about handling cancellation.

We are now in a position to describe the significance of the $J_t(s)$ function. This function has been defined such that when performing the contour shift to estimate the value of $\xi^F_t(J_t(s))$, the region where the integrand oscillates negligibly is near the point $s$ (although the precise shift will depend on the value of $n$ as well). 

We now give a heuristic argument for Lemma \ref{mainlemma} which will also serve as a sketch to be made rigorous. We will skip over any details about contour shifting for now.

The first step is to locally approximate the integrand of $B_{t,n}(s)$ near $s$ because it turns out that this is approximately where the dominant contribution is. We will see in an upcoming lemma that for all $z$ suitably close to $s$ one has
\begin{equation} \label{eq:gheur}
\gamma(z) \approx \gamma(s) \exp\left(\left(\log Q+\sum_{i=1}^k \omega_i \Log(\omega_i s)\right)(z-s)+\frac{\sum_{i=1}^k \omega_i}{2 s}(z-s)^2\right)
\end{equation}
where this is essentially coming from a Taylor expansion of $\Log \gamma(z)$ around $s$.
A similar Taylor expansion of the Gaussian factor in the integrand of $B_{t,n}(s)$ yields
\begin{equation} \label{eq:expid}
e^{\frac{1}{|t|}(J_t(s)-z)^2} = e^{\frac{1}{|t|}(J_t(s)-s)^2} \exp\left(\frac{2}{|t|}(s-J_t(s))(z-s)+\frac{1}{|t|}(z-s)^2\right)
\end{equation}
where in this case the Taylor expansion is exact because the phase is simply a quadratic polynomial.

Taking the product of \eqref{eq:gheur} and \eqref{eq:expid}, the constant term in the resulting Taylor expansion is
\[
\gamma(s) e^{\frac{1}{|t|}(J_t(s)-s)^2} = \gamma_t(s),
\]
and the coefficient of the $(z-s)$ term is
\[
\log Q+\sum_{i=1}^k \omega_i \Log(\omega_i s)+\frac{2}{|t|}(s-J_t(s))=0
\]
where this equality holds because we have chosen the definition of $J_t(s)$ to make it so. 

Putting this information together, we get the heuristic approximation
\begin{align*}
B_{t,n}(s) \approx \gamma_t(s) \frac{1}{i\sqrt{\pi |t|}}\int_{2-\infty i}^{2+\infty i} \exp\left(A(z-s)^2\right)n^{-z}\, dz
\end{align*}
where $A \coloneqq \frac{1}{|t|}+\frac{\sum_{i=1}^k \omega_i}{2s}$.

The remaining integral can now be computed directly. By completing the square,
\begin{multline*}
\exp\left(A(z-s)^2\right)n^{-z} = \exp\left(-\frac{1}{4A}\log^2 n\right)n^{-s} \\ \times \exp\left(A\left(z-\left(s+\frac{1}{2A}\log n\right)\right)^2\right)
\end{multline*}
which means
\begin{multline} \label{eq:Bheur}
B_{t,n}(s) \approx \gamma_t(s) \exp\left(-\frac{1}{4A}\log^2 n\right)n^{-s} \\
\times \frac{1}{i\sqrt{\pi |t|}}\int_{2-\infty i}^{2+\infty i} \exp\left(A(z-(s+\frac{1}{2A}\log n))^2\right)\, dz.
\end{multline}

The value of this last integral is just $i\sqrt{\pi/A}$. Inserting this and then removing all instances of $A$ in the resulting expression by using the approximate equality $A \approx \frac{1}{|t|}$ gives the desired main term in (i). 

Before giving a rigorous version of the heuristic calculations above, we first list several lemmas that we will need. The proofs of these lemmas will be given in Section \ref{lemmasection}. The first is a rigorous version of \eqref{eq:gheur}:
\begin{lemma} \label{gapproxlemma}
For any $\eps > 0$, define the region $S_\eps \coloneqq \left\{|\Arg w| < \pi - \eps, |w|>\eps\right\}$. Let $z,z_0 \in S_\eps$ and $|z-z_0| \leq D |z_0|^{2/3}$ for some $D > 0$. Then for all $z, z_0$ that are at least unit distance away from the poles and zeros of $\gamma$, we have the estimate
\begin{multline*}
\gamma(z) = \gamma(z_0) \exp\left(\left(\log Q+\sum_{j=1}^{k} \omega_j \Log(\omega_j z_0)\right)(z-z_0)+\frac{\sum_{j=1}^k \omega_j}{2z_0} (z-z_0)^2\right) \\
\times \left(1+O\left(\frac{1+|z-z_0|}{|z_0|}+\frac{|z-z_0|^3}{|z_0|^2}\right)\right)
\end{multline*}
where the implicit constant may depend on $\eps$ and $D$.
\end{lemma}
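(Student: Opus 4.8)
The plan is to prove this by exponentiating a second–order Taylor expansion of $\Log\gamma$ about $z_0$, with the cubic remainder controlled by the third derivative. Set
\[
L(w) \coloneqq \frac{\gamma'(w)}{\gamma(w)} = \frac{m}{w}+\frac{m}{w-1}+\log Q+\sum_{j=1}^{k}\omega_j\,\psi(\omega_j w+\mu_j),
\]
where $\psi=\Gamma'/\Gamma$ is the digamma function; then $\gamma(z)/\gamma(z_0)=\exp\!\left(\int_{z_0}^{z}L(w)\,dw\right)$ along any segment on which $\gamma$ is holomorphic and non-vanishing, and the whole problem reduces to estimating this integral along $[z_0,z]$.

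The first step is a geometric reduction. The only zeros of $\gamma$ are $0$ and $1$, and its poles (coming from the $\Gamma$-factors) have bounded imaginary parts; hence only finitely many zeros and poles of $\gamma$ lie in $S_\eps$, all inside a disc $\{|w|\le R_0\}$ with $R_0=R_0(\eps)$. Consequently there is $R_1=R_1(\eps,D)$ such that whenever $|z_0|\ge R_1$ the hypothesis $|z-z_0|\le D|z_0|^{2/3}=o(|z_0|)$ forces $[z_0,z]$ to lie inside $S_{\eps/2}\cap\{|w|>|z_0|/2\}$, a region containing no zero or pole of $\gamma$ on which the Stirling asymptotics below hold (in particular each $\Log(\omega_j w+\mu_j)$ is single-valued there). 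When $|z_0|<R_1$ instead, a compactness argument suffices: the hypotheses keep $z,z_0$ at unit distance from the zeros and poles of $\gamma$ and keep $|\omega_j z_0|$ bounded away from $0$ with argument bounded away from $\pm\pi$, so $\gamma(z)/\gamma(z_0)$ and the claimed exponential factor are each bounded above and below by constants depending only on $\eps$ and $D$; the asserted identity then holds with error $O_{\eps,D}(1)$, which is admissible because the stated error term is $\gg 1/|z_0|\gg_{\eps,R_1}1$.

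In the main case $|z_0|\ge R_1$, Stirling's formula gives, uniformly on the relevant region, $\psi(u)=\Log u+O(1/|u|)$, $\psi'(u)=1/u+O(1/|u|^2)$ and $\psi''(u)=O(1/|u|^2)$. Applying these at $u=\omega_j w+\mu_j$, using $\Log(\omega_j w+\mu_j)=\Log(\omega_j w)+O(1/|w|)$, and noting that $\frac{m}{w}+\frac{m}{w-1}$ together with its first two derivatives are $O(1/|w|)$, $O(1/|w|^2)$, $O(1/|w|^2)$, one obtains
\[
L(z_0)=\log Q+\sum_{j=1}^{k}\omega_j\Log(\omega_j z_0)+O\!\left(\frac{1}{|z_0|}\right),\qquad L'(z_0)=\frac{\sum_{j=1}^{k}\omega_j}{z_0}+O\!\left(\frac{1}{|z_0|^2}\right),
\]
and $L''(w)=O(1/|w|^2)=O(1/|z_0|^2)$ for every $w$ on the segment. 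Expanding $L(w)$ to first order about $z_0$ inside $\int_{z_0}^{z}L(w)\,dw$, the cubic remainder is $\tfrac12\int_{z_0}^{z}L''(\xi_w)(w-z_0)^2\,dw=O(|z-z_0|^3/|z_0|^2)$, and collecting the two error terms above (the quadratic one being $O(|z-z_0|^2/|z_0|^2)\le O(|z-z_0|/|z_0|)$) gives
\[
\int_{z_0}^{z}L(w)\,dw=\left(\log Q+\sum_{j=1}^{k}\omega_j\Log(\omega_j z_0)\right)(z-z_0)+\frac{\sum_{j=1}^{k}\omega_j}{2z_0}(z-z_0)^2+O\!\left(\frac{1+|z-z_0|}{|z_0|}+\frac{|z-z_0|^3}{|z_0|^2}\right).
\]
Exponentiating now yields the lemma: the restriction $|z-z_0|\le D|z_0|^{2/3}$ makes $|z-z_0|^3/|z_0|^2\le D^3$ and $|z-z_0|/|z_0|\le D\eps^{-1/3}$, so the error in the exponent is bounded and $\exp(O(\cdot))=1+O(\cdot)$.

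The analytic content is routine Stirling together with Taylor's theorem with remainder; the point that needs genuine care is the geometric reduction in the second paragraph. Since $S_\eps$ comes arbitrarily close to the poles of the $\Gamma$-factors (which accumulate toward the negative real axis) and the segment $[z_0,z]$ may have length up to $D|z_0|^{2/3}$, it is precisely this sub-linear growth rate that must be exploited to guarantee that the segment never leaves the Stirling sector, never approaches a zero or pole of $\gamma$, and never pushes any $\Log(\omega_j w+\mu_j)$ across its branch cut. A secondary source of bookkeeping is checking that every subleading contribution — the rational terms $m/w+m/(w-1)$, the discrepancy between $\Log(\omega_j w+\mu_j)$ and $\Log(\omega_j w)$, and the $-1/(2u)$ term in Stirling — collapses into the allowed error $O\!\left(\frac{1+|z-z_0|}{|z_0|}+\frac{|z-z_0|^3}{|z_0|^2}\right)$ after integration.
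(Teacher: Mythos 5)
Your proof is correct, and it reaches the estimate by a route that differs in execution from the paper's. The paper never leaves the two points: it applies Stirling's formula $\Gamma(w)=\sqrt{2\pi}\exp\left(w\Log w-w-\tfrac12\Log w\right)\left(1+O(1/|w|)\right)$ at $w=\omega_j z+\mu_j$ and $w_0=\omega_j z_0+\mu_j$, forms the ratio $\Gamma(w)/\Gamma(w_0)$, and expands $w\Log w-w-w_0\Log w_0+w_0$ via $\Log(1+s)=s-\tfrac{s^2}{2}+O(s^3)$, treating the polynomial and $Q^z$ factors separately; no information about $\gamma$ between $z_0$ and $z$ is needed. You instead write $\gamma(z)/\gamma(z_0)=\exp\bigl(\int_{z_0}^{z}(\gamma'/\gamma)(w)\,dw\bigr)$ and expand the logarithmic derivative using digamma asymptotics, which forces you to control the integrand along the whole segment $[z_0,z]$ --- something the hypotheses only guarantee at the endpoints --- and you correctly supply the missing geometry: the sublinear bound $|z-z_0|\le D|z_0|^{2/3}$ keeps the segment inside $S_{\eps/2}\cap\{|w|>|z_0|/2\}$, away from the finitely many zeros and poles of $\gamma$ lying in that sector, once $|z_0|$ is large, with the small-$|z_0|$ case disposed of by compactness exactly as in the paper. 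What your route buys is a cleaner error ledger: the pointwise values $L(z_0)$, $L'(z_0)$ and the bound $L''=O(|z_0|^{-2})$ on the segment absorb all subleading contributions at once (indeed you do not even need the standalone $1/|z_0|$ term, which in the paper arises from the multiplicative $1+O(1/|w|)$ in Stirling's formula), at the cost of the extra path verification. One cosmetic point: the Lagrange form $\tfrac12 L''(\xi_w)(w-z_0)^2$ of the Taylor remainder is not literally valid for complex-valued functions; use the integral form $\int_{z_0}^{w}(w-\xi)L''(\xi)\,d\xi$, which yields the same $O\left(|z-z_0|^3/|z_0|^2\right)$ bound after integrating in $w$.
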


The above lemma can be used to approximate the integrand of $B_{t,n}(s)$ when $z$ is relatively close to $s$. It will also be useful to have an upper bound on the integrand when $z$ is far away from $s$. 
\begin{lemma} \label{integrandboundlemma}
Let $t$, $s=x+iy$, and $C$ satisfy the same hypotheses as in Theorem \ref{mainthm}. Let $n \leq \exp(y^{3/5}/|t|)$ (i.e. the small/medium case of Lemma~\ref{mainlemma}) and let
\[
\mathcal{I}(z) \coloneqq \gamma(z)e^{\frac{1}{|t|}(J_t(s)-z)^2}n^{-z}
\]
be the integrand of \eqref{eq:Btndef}. Then for all $y$ sufficiently large (depending on $C$) the following estimates hold (where the implicit constants may also depend on $C$):
\begin{enumerate}[(i)]
\item If $|x-\Re z| \leq y^{3/5}$ and $\frac{1}{2} y^{2/3} \leq |y-\Im z| \leq 2y^{2/3}$ then 
\[
\mathcal{I}(z) \ll \exp\left(-\frac{y^{4/3}}{10|t|}\right).
\]
\item If $\Re z = 2$ and $|y-\Im z| \geq y^{2/3}$ then
\[
\mathcal{I}(z) \ll \exp\left(-\frac{1}{2|t|}(y-\Im z)^{2}\right).
\]
\end{enumerate}
\end{lemma}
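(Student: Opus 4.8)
The strategy is to take logarithms and bound $\log|\mathcal{I}(z)| = \log|\gamma(z)| + \frac{1}{|t|}\Re\left((J_t(s)-z)^2\right) - (\Re z)\log n$ by estimating each of the three pieces separately in the two regimes. The factor $n^{-z}$ is the easiest: since $\Re z$ stays bounded (it equals $2$ in part (ii), and is within $y^{3/5}$ of $x$ in part (i), with $|x| \leq Cy^{1/4}$) and $n \leq \exp(y^{3/5}/|t|)$, we have $|n^{-z}| \leq \exp(O(y^{3/5}\cdot y^{3/5}/|t|)) = \exp(O(y^{6/5}/|t|))$, which is negligible compared to the Gaussian savings of order $y^{4/3}/|t|$ in part (i) and can be absorbed similarly in part (ii). For the $\gamma$ factor, since $z$ lies in a region of the type covered by Lemma \ref{gammasizelemma} — one checks $|\Re z| \ll y^{1/4} + y^{3/5} \ll |\Im z|^{3/4}$ once $y$ is large, so we may take $\theta = 3/4$ — we get $|\gamma(z)| \leq \exp(-K|\Im z|)$, i.e. $\gamma$ contributes only a harmless $\exp(-K|\Im z|)$ (at worst it is never larger than $1$), and in the regimes at hand $\Im z \asymp y$ so this is a genuine (small) saving.

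The main work, and the main obstacle, is the Gaussian exponent $\frac{1}{|t|}\Re\left((J_t(s)-z)^2\right)$. Write $J_t(s) = s + E$ where $E \coloneqq \frac{|t|}{2}\log Q + \frac{|t|}{2}\sum_i \omega_i\Log(\omega_i s)$ is a correction of size $O(|t|\log y)$, in particular $E = o(y)$. Then $J_t(s) - z = (s-z) + E$, so
\[
\Re\left((J_t(s)-z)^2\right) = \Re\left((s-z)^2\right) + 2\Re\left((s-z)E\right) + \Re(E^2).
\]
The cross term is $O(|s-z|\cdot|t|\log y)$ and $\Re(E^2) = O(|t|^2\log^2 y)$; both are lower order once $|s-z|$ is of polynomial size in $y$ and will be swallowed by the main quadratic term. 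For the main term, set $s - z = (x - \Re z) + i(y - \Im z) \eqqcolon a + ib$, so $\Re\left((s-z)^2\right) = a^2 - b^2$. In part (i) we have $|a| = |x - \Re z| \leq y^{3/5}$ (also note $|\Re z|$ might exceed $|x|$, but the hypothesis controls $|x - \Re z|$ directly) and $\frac12 y^{2/3} \leq |b| \leq 2y^{2/3}$, so $a^2 - b^2 \leq y^{6/5} - \frac14 y^{4/3}$; since $6/5 < 4/3$, for $y$ large this is $\leq -\frac15 y^{4/3}$, and after dividing by $|t|$ and folding in the (favorable) $\gamma$ and $n^{-z}$ contributions we get $\mathcal{I}(z) \ll \exp(-y^{4/3}/(10|t|))$, allowing for a little slack in constants. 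In part (ii), $\Re z = 2$ so $a = x - 2 = O(y^{1/4})$ is tiny, and $|b| = |y - \Im z| \geq y^{2/3}$, giving $\Re\left((s-z)^2\right) = a^2 - b^2 \leq O(y^{1/2}) - (y - \Im z)^2 \leq -\frac{3}{4}(y - \Im z)^2$ for $y$ large (the $O(y^{1/2})$ loss is absorbed because $(y-\Im z)^2 \geq y^{4/3}$), and then the cross term $O(|s-z|\cdot|t|\log y)$ is likewise dominated; combining with the $\gamma$ and $n^{-z}$ bounds yields $\mathcal{I}(z) \ll \exp(-(y-\Im z)^2/(2|t|))$.

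The subtle points to be careful about are: (1) verifying that in part (i) the point $z$ genuinely stays far from the poles of $\gamma$ (the poles lie at bounded height from finitely many $\Gamma$-factors, while $\Im z \asymp y$ is large, so this is automatic for $y$ large); (2) checking that the error terms $O(|s-z|\cdot|t|\log y)$ and $O(|t|^2\log^2 y)$ in the Gaussian exponent are genuinely dominated — in part (i), $|s-z| \ll y^{2/3}$ so this error is $\ll y^{2/3}\log y \ll y^{4/3}$ with room to spare, and in part (ii) the $(y-\Im z)^2 \geq y^{4/3}$ lower bound gives the same conclusion; and (3) bookkeeping the constants so that the loss from $n^{-z}$, which is of size $\exp(O(y^{6/5}/|t|))$, is comfortably beaten by the $\exp(-\,\text{const}\cdot y^{4/3}/|t|)$ saving — this is exactly why the exponents $y^{3/5}$ (range of $n$) and $y^{2/3}$ (displacement of the contour) were chosen, since $2 \cdot \frac35 = \frac65 < \frac43 = 2\cdot\frac23$. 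Once these are in hand the lemma follows by simply assembling the three estimates.
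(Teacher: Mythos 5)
Your proposal is correct and takes essentially the same route as the paper: bound the three factors of $\mathcal{I}(z)$ separately, using Lemma \ref{gammasizelemma} to get $\gamma(z)=O(1)$, the bound $|n^{-z}|\leq \exp(O(y^{6/5}/|t|))$ in case (i) (and $O(1)$ in case (ii)), and the observation $J_t(s)=s+O(\log y)$ to make the Gaussian exponent contribute $\exp(-c(y-\Im z)^2/|t|)$, with the decisive comparison $6/5<4/3$ exactly as in the paper. The only small inaccuracy is the remark that $\Im z\asymp y$ ``in the regimes at hand,'' which fails in case (ii) where the line $\Re z=2$ passes near the real axis; but there one only needs $\gamma$ bounded on that line, which follows from Lemma \ref{gammasizelemma} with $\theta=0$ (the poles of $\gamma$ lie in $\Re z\leq 0$), so this does not affect the argument.
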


We will also need a weak bound on $\gamma$ at some point, 
\begin{lemma} \label{gboundlemma}
There is some $K>0$ such that $|\gamma(z)| \leq \exp(K (\Re z)^{1.1})$ uniformly for any $z$ with $\Re z \geq 1$.
\end{lemma}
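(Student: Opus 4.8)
The plan is to reduce to Stirling's formula for the individual $\Gamma$ factors and then bound each of them trivially in terms of $\Re z$ alone. Recall $\gamma(z) = \alpha z^m (z-1)^m Q^z \prod_{j=1}^k \Gamma(\omega_j z + \mu_j)$. First I would dispose of the elementary factors: for $\Re z \geq 1$ we have $|\alpha z^m (z-1)^m Q^z| \ll (1+|z|)^{2m} Q^{\Re z}$, and the polynomial growth in $|z|$ is not yet controlled by $(\Re z)^{1.1}$ when $\Im z$ is large, so the real work is to show that the $\Gamma$ factors supply enough decay in $|\Im z|$ to absorb it. The cleanest route is to invoke the standard bound $|\Gamma(\sigma + i\tau)| \ll_{\sigma_0} (1 + |\tau|)^{\sigma - 1/2} e^{-\pi|\tau|/2}$ valid uniformly for $\sigma$ in any bounded interval; more useful here is a form valid for $\sigma$ growing: $|\Gamma(\sigma+i\tau)| \ll |\sigma + i\tau|^{\sigma + \Re(\cdot) - 1/2} e^{-\sigma}e^{-\pi|\tau|/2} \cdot e^{O(1)}$ for $\sigma \geq 1/2$ say, which follows from Stirling. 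Applying this with $\sigma + i\tau = \omega_j z + \mu_j$ (so $\sigma = \omega_j \Re z + \Re \mu_j \asymp \Re z$ and $\tau = \omega_j \Im z + \Im \mu_j$), each factor is $\ll |\omega_j z + \mu_j|^{O(\Re z)} e^{-c\Re z} e^{-\pi|\tau|/2}$ for a constant $c$ depending on $\omega_j$.

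The key cancellation is then between the $|\omega_j z + \mu_j|^{O(\Re z)}$ term and the exponential Gaussian-type decay $e^{-\pi|\Im z|/2}$ coming from $e^{-\pi|\tau|/2}$ (note $|\tau| \gg |\Im z| - O(1)$). Indeed $|\omega_j z + \mu_j|^{O(\Re z)} = \exp\bigl(O(\Re z) \log|\omega_j z + \mu_j|\bigr) \leq \exp\bigl(O(\Re z)(\log \Re z + \log(2 + |\Im z|/\Re z))\bigr)$. When $|\Im z| \leq (\Re z)^2$, say, we get $\log|\omega_j z + \mu_j| \ll \log \Re z$, so this factor is $\exp(O(\Re z \log \Re z))$, which is dwarfed by $\exp(K(\Re z)^{1.1})$ for large $\Re z$ (and bounded for small $\Re z$, handled by enlarging $K$ or noting continuity on the compact set $1 \leq \Re z \leq \Re z_0$). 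When $|\Im z| > (\Re z)^2$, the factor $e^{-\pi|\Im z|/2}$ beats $\exp(O(\Re z) \log|\Im z|)$ outright since $|\Im z| \gg (\Re z)\log|\Im z|$ in that range; combined with the fact that all other factors grow at most like $e^{O(\Re z)} (1+|z|)^{2m} Q^{\Re z}$ and $(1+|z|)^{2m} \leq e^{-\pi|\Im z|/4} \cdot (\text{bounded})$ for $|\Im z|$ large, the whole product is bounded. So in both ranges $|\gamma(z)| \ll \exp(O(\Re z \log \Re z)) \leq \exp(K(\Re z)^{1.1})$ once $\Re z$ is large enough, and the finitely many remaining values of $\Re z$ only change $K$.

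The main obstacle is purely bookkeeping: making the Stirling estimate uniform in a regime where $\sigma = \omega_j \Re z + \Re \mu_j$ ranges over an unbounded set while $\tau$ ranges over all of $\R$, and correctly tracking that the polynomial-in-$|z|$ contributions (both from $z^m(z-1)^m$ and from the $|\omega_j z + \mu_j|^{\sigma - 1/2}$ in Stirling) are always absorbed either by $(\Re z)^{1.1}$ when $|\Im z|$ is moderate or by the exponential $e^{-\pi|\Im z|/2}$ when $|\Im z|$ is large. There is no genuine difficulty — the exponent $1.1 > 1$ gives ample room over the true growth rate $\Re z \log \Re z$ — so I expect the proof to be short, essentially a corollary of the same Stirling analysis used to prove Lemma~\ref{gammasizelemma}.
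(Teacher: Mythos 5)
Your proposal takes essentially the same approach as the paper: the same case split at $|\Im z| = (\Re z)^2$, with the large-$\Im z$ case handled by the exponential decay of $\Gamma$ and the moderate-$\Im z$ case handled by noting $|z| \ll (\Re z)^2$ so Stirling's growth is only $\exp(O(\Re z \log \Re z))$. The paper is slightly more economical: in the first case it just cites Lemma~\ref{gammasizelemma} with $\theta = 1/2$ (since $|\Re z| \le |\Im z|^{1/2}$), and in the second case it sidesteps Stirling entirely via the elementary inequality $|\Gamma(w)| \le \Gamma(\Re w)$ for $\Re w > 0$, which reduces matters to the real Gamma function. Two small slips in your write-up are worth noting, though neither is fatal. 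First, the stated form $|\Gamma(\sigma+i\tau)| \ll |\sigma+i\tau|^{\sigma - 1/2}e^{-\sigma}e^{-\pi|\tau|/2}$ is \emph{not} a valid upper bound when $\sigma$ grows; the genuine exponential factor in Stirling is $e^{-\tau \Arg(\sigma+i\tau)}$ with $\Arg(\sigma+i\tau) = \arctan(\tau/\sigma)$, which is close to $\pm\pi/2$ only when $|\tau| \gg \sigma$ (for $\sigma \asymp |\tau|$ the decay is merely $e^{-\pi|\tau|/4}$). Your argument survives because you only exploit the $e^{-\pi|\tau|/2}$ decay in the regime $|\Im z| > (\Re z)^2$, where indeed $|\tau| \gg \sigma$, and in the other regime you discard that factor. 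Second, the strip $\{1 \le \Re z \le \Re z_0\}$ is not compact (its $\Im z$-extent is unbounded); boundedness there should instead be deduced from the exponential decay of $\gamma$ as $|\Im z| \to \infty$, as in Lemma~\ref{gammasizelemma}.
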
 

\begin{proof}[Proof of Lemma \ref{mainlemma}]
Recall that $s=x+iy$, and we assume that $y > 0$,  $|x| \leq Cy^{1/4}$ and $-C < t < 0$. The lemma is only asserted to hold when $y$ is sufficiently large, and throughout the proof we will frequently assume that $y$ is large enough (depending on $C$) to make various statements hold. We also allow all implicit constants to depend on $C$.

We start by proving (i) and (ii) using the heuristic calculations given above as framework. These are the cases of small and medium $n$ where $|t| \log n \leq y^{1/3}$ and $|t| \log n \leq y^{3/5}$ respectively. We will not need to distinguish between these two cases until the very end of the proof, so for now we will just assume that the latter bound holds. 

Recall that in the heuristic calculations, the final integral in \eqref{eq:Bheur} was a complex Gaussian centered at the point $s+\frac{1}{2A} \log n$ where $A = \frac{1}{|t|} + \frac{1}{2s}\sum_{j=1}^{k}\omega_j$. We will shift our contour to pass through this point. The shifted contour we select is the piecewise linear contour passing though $2-\infty i$, $2 + (y-y^{2/3})i$, $s+\frac{1}{2A}\log n - y^{2/3} i$, $s+\frac{1}{2A}\log n + y^{2/3} i$, $2+(y+y^{2/3})i$, and $2+\infty i$. Let $V_1$, $H_1$, $M$, $H_2$, and $V_2$ denote these linear pieces respectively.

We will make frequent use of the estimate 
\begin{equation} \label{eq:Aest}
A = \frac{1}{|t|}\left(1+O\left(\frac{|t|}{y}\right)\right) \asymp \frac{1}{|t|},
\end{equation}
starting with the fact that this implies $\frac{1}{2A}\log n \ll y^{3/5}$. From this and our assumption $x \ll y^{1/4}$, it follows that the contour shift we are performing occurs completely within a $O(y^{2/3})$ radius ball around $s$. One consequence of this is that for all large $y$ the contour shift does not pass over any poles of the integrand (which come from the $\Gamma$ factors) because the imaginary parts of these poles are uniformly bounded above. Hence, by Cauchy's theorem
\[
\int_{2-\infty i}^{2+\infty i} = 
\int_{V_1}+\int_{H_1}+\int_{M}+\int_{H_2}+\int_{V_2}
\]
where the $M$ integral will contribute the main term of our estimate, and the other integrals are negligible in comparison. We will apply Lemma \ref{gapproxlemma} to estimate the $M$ integral and Lemma \ref{integrandboundlemma} to bound the other four.

\subsubsection*{M integral}
As noted above, the segment $M$ lies completely within a circle of radius $O(y^{2/3})$ around $s$, so Lemma \ref{gapproxlemma} gives an approximation for $\gamma(z)$ in terms of $\gamma(s)$ for every $z\in M$. Applying the lemma and then manipulating the integrand in the same way that we did in our heuristic calculations (but now carrying along the error terms coming from Lemma~\ref{gapproxlemma}), the integral over $M$ becomes 
\begin{multline} \label{eq:Mapprox}
 \gamma_t(s) \exp\left(-\frac{1}{4A}\log^2 n\right)n^{-s}
 \frac{1}{i\sqrt{\pi |t|}}\\
 \times \int_M \exp\left(A(z-(s+\frac{1}{2A}\log n))^2\right)\left(1+O\left(\frac{1+|z-s|}{|s|}+\frac{|z-s|^3}{|s|^2}\right)\right)\, dz.
\end{multline}

Parametrizing $M$ as $z(u) = s+ \frac{1}{2A} \log n + ui$ for $u=-y^{2/3}$ to $u=y^{2/3}$, and then using the fact that $|z(u)-s| \leq |u| + \left|\frac{1}{2A} \log n \right| \ll |u| + y^{3/5}$ and $|s| \asymp y$ to simplify the error terms we deduce that \eqref{eq:Mapprox} equals
\begin{equation} \label{eq:Mapprox2}
 \gamma_t(s) \exp\left(-\frac{1}{4A}\log^2 n\right)n^{-s} \frac{1}{\sqrt{\pi |t|}}\int_{-y^{2/3}}^{y^{2/3}} \exp\left(-Au^2\right) \left(1+O\left(\frac{1+|u|^3}{y^{1/5}}\right)\right)\, du.
\end{equation}

The integral of the error term can be bounded via the triangle inequality,
\begin{align*}
\int_{-y^{2/3}}^{y^{2/3}} \exp\left(-Au^2\right) O\left(\frac{1+|u|^3}{y^{1/5}}\right) \, du &\ll y^{-1/5} \int_{-\infty}^{\infty} \exp\left(-\frac{1}{2 |t|}u^2\right) \left(1+|u|^3\right) \, du \\
&\ll \sqrt{|t|} y^{-1/5},
\end{align*}
where we have used the fact that $\Re A \geq \frac{1}{2|t|}$ which comes from \eqref{eq:Aest} and taking $y$ sufficiently large. 

Estimating the main term now we see that
\begin{align*}
\int_{-y^{2/3}}^{y^{2/3}} \exp\left(-Au^2\right) \, du &= 
\int_{-\infty}^{\infty} \exp\left(-Au^2\right) \, du +O\left(\int_{y^{2/3}}^{\infty} \exp\left(-(\Re A)u^2\right) \, du\right) \\
&= \sqrt{\frac{\pi}{A}}+O\left(|t| \exp\left(-\frac{y^{4/3}}{2|t|}\right)\right),
\end{align*}
where the second line follows from the first by using $\Re A \geq \frac{1}{2|t|}$ again.

Inserting these estimates into \eqref{eq:Mapprox2} and using $\frac{1}{\sqrt{A}} = \sqrt{|t|}(1+O(y^{-1}))$, which follows from \eqref{eq:Aest}, we see that the value of the $M$ contour integral is
\[
=\gamma_t(s) \exp\left(-\frac{1}{4A}\log^2 n\right)n^{-s} (1+O(y^{-1/5}))
\]
which we will see later is the main term of our estimate.

\subsubsection*{$H_1$ and $H_2$ integrals}
The $H_1$ integral is
\[
\frac{1}{i \sqrt{\pi |t|}} \int_{H_1} \gamma(z) e^{\frac{1}{|t|}(J_t(s)-z)^2} n^{-z}\, dz,
\]
so applying the $ML$-inequality gives the bound
\begin{equation} \label{eq:MLbound}
\ll \frac{1}{\sqrt{|t|}} |H_1| \max_{z\in H_1} \left|\gamma(z) e^{\frac{1}{|t|}(J_t(s)-z)^2} n^{-z}\right|
\end{equation}
where $|H_1|$ denotes the length of the contour.

Recall that $H_1$ is the line segment from $2+(y-y^{2/3})i$ to $s+\frac{1}{2A} \log n -y^{2/3} i$, and since $\left| \frac{1}{2A}\log n \right| \leq y^{3/5} \leq y^{2/3}/2$ for all sufficiently large $y$, the hypotheses of Lemma~\ref{integrandboundlemma}(i) are satisfied, so
\[
\max_{z\in H_1} \left|\gamma(z) e^{\frac{1}{|t|}(J_t(s)-z)^2} n^{-z}\right| \ll \exp\left(-\frac{y^{4/3}}{10|t|}\right).
\]

Also,
\[
|H_1| = \left|x+\frac{1}{2A} \log n -2 \right| \ll y^{1/4}+y^{3/5}+2 \ll y,
\]
so, we conclude that the integral over $H_1$ is bounded by
\[
\ll \frac{y}{\sqrt{|t|}}\exp\left(-\frac{y^{4/3}}{10|t|}\right) \ll \exp\left(-\frac{y^{4/3}}{20|t|}\right)
\]
since $|t| \leq C$ and $y$ is large.

For the $H_2$ integral, an identical bound holds and the derivation is the same.

\subsubsection*{$V_1$ and $V_2$ integrals}
First consider the integral over $V_1$. Applying the triangle inequality and Lemma \ref{integrandboundlemma}(ii) we get
\begin{align*}
\frac{1}{i\sqrt{\pi |t|}}\int_{V_1} \gamma(z) \exp\left(\frac{1}{|t|}(J_t(s)-z)^2 \right) & n^{-z}\, dz \\
&\ll \frac{1}{\sqrt{|t|}} \int_{V_1} \exp\left(\frac{1}{2|t|}(y-\Im z)^2\right)\, |dz| \\
&= \frac{1}{\sqrt{|t|}}\int_{-\infty}^{-y^{2/3}} \exp\left(-\frac{u^2}{2|t|}\right)\, du \\
&\ll \exp\left(-\frac{y^{4/3}}{20|t|}\right).
\end{align*}

A matching argument gives the same bound for the $V_2$ integral.

\subsubsection*{Conclusion of (i) and (ii)}
Adding together our estimates for all five integrals we get
\begin{equation} \label{eq:Btnwithadd}
B_{t,n}(s)=\gamma_t(s) \exp\left(-\frac{1}{4A}\log^2 n\right)n^{-s} (1+O(y^{-1/5})) + O\left(\exp\left(-\frac{y^{4/3}}{20|t|}\right)\right).
\end{equation}

In order to absorb the additive error term at the end into the multiplicative error term it suffices to show the following lower bound on the size of the main term: 
\begin{equation} \label{eq:mtlowerbd}
\left| \gamma_t(s) \exp\left(-\frac{1}{4A}\log^2 n\right)n^{-s} \right| \gg \exp\left(-\frac{2y^{6/5}}{|t|}\right).
\end{equation}
(This is sufficient because the exponent $6/5$ is less than $4/3$.)

To derive this lower bound, we will bound each of the factors individually. For the first factor, recall that $\gamma_t(s) \coloneqq \gamma(s) \exp\left(\frac{1}{|t|}(s-J_t(s))^2\right)$. Since $x \ll y^{1/4}$, we can apply Lemma \ref{gammasizelemma} to get
\[
|\gamma(s)| \gg \exp(-K' y)
\]
for some $K' > 0$, and furthermore
\begin{align*}
\left|\exp\left(\frac{1}{|t|}(s-J_t(s))^2\right)\right| &= \left|\exp\left(\frac{1}{|t|}\Re\left(\left(s-J_t(s)\right)^2\right)\right)\right| \\
&\geq \left|\exp\left(-\frac{1}{|t|}\left(\Im \left(s-J_t(s)\right)\right)^2\right)\right| \\
&\geq \exp(-O(1))
\end{align*}
where the last inequality comes from the fact that $\Im(s-J_t(s)) = O(|t|)$ by definition \eqref{eq:Jtdef} of $J_t(s)$. Combining these lower bounds gives
\begin{equation} \label{eq:gtlowerbound}
\left|\gamma_t(s)\right|\gg \exp(-K' y)
\end{equation}
for all sufficiently large $y$.

To lower bound the other two factors of \eqref{eq:mtlowerbd}, note $\Re\left(\frac{1}{A}\right)\leq 2 |t|$ for all large $y$ by \eqref{eq:Aest}, and $\log n \leq y^{3/5}/|t|$ by assumption, so
\[
\left| \exp\left(-\frac{1}{4A}\log^2 n\right) \right| \geq \exp\left(-\frac{|t|}{2}\log^2 n\right) 
\geq \exp\left(-\frac{y^{6/5}}{2|t|}\right)
\]
and also
\[
|n^{-s}| = \exp(-x \log n) \geq \exp\left(-\frac{y^{6/5}}{2|t|}\right)
\]
since $x\ll y^{1/4}$.

Hence, \eqref{eq:mtlowerbd} is true, so we deduce that
\begin{equation} \label{eq:Btnnoadd}
B_{t,n}(s)=\gamma_t(s) \exp\left(-\frac{1}{4A}\log^2 n\right)n^{-s} (1+O(y^{-1/5})).
\end{equation}
From this estimate we shall now deduce (i) and (ii) of the lemma. By taking absolute values in \eqref{eq:Btnnoadd} and applying $\Re\left(\frac{1}{A}\right) \geq \frac{|t|}{2}$, which follows from \eqref{eq:Aest}, we deduce (ii) immediately. To get (i), note that since $\log n \leq y^{1/3}/|t|$ in this case
\begin{align*} 
-\frac{1}{4A}\log^2 n &= -\frac{|t|}{4} \left(1+O\left(\frac{|t|}{y}\right)\right) \log^2 n \\
&= -\frac{|t|}{4}\log^2 n + O(y^{-1/3}),
\end{align*}
where we have used \eqref{eq:Aest} once again in the first line. Inserting this estimate into \eqref{eq:Btnnoadd} and using $\exp(O(y^{-1/3}))=1+O(y^{-1/3})$ gives (i).

\subsubsection*{Proof of (iii)}
Recall that the integral defining $B_{t,n}(s)$ is 
\[
B_{t,n}(s)=\frac{1}{i \sqrt{\pi |t|}} \int_{2-\infty i}^{2+\infty i} \gamma(z) e^{\frac{1}{|t|}(J_t(s)-z)^2} n^{-z}\, dz.
\]
We now work under the assumption that $n$ is large enough that $|t| \log n > y^{3/5}$.

We can rewrite the exponentials in the integrand by completing the square
\begin{equation} \label{eq:completesquare}
e^{\frac{1}{|t|}(J_t(s)-z)^2}n^{-z} = \exp\left(-\left(J_t(s)+\frac{|t|}{4}\log n\right)\log n\right) e^{\frac{1}{|t|}\left(z-\left(J_t(s)+\frac{|t|}{2}\log n\right)\right)^2},
\end{equation}
so if we choose to  integrate over the vertical line $\Re z = \Re \left(J_t(s)+\frac{|t|}{2}\log n\right)$, we see that there would be no oscillation coming these exponentials. Let $L$ denote this vertical line, and we now claim that $L$ lies in the right half plane. Indeed, because $\Re J_t(s) = x+O(\log y) = O(y^{1/4})$, and  $|t| \log n > y^{3/5}$, we see that
\begin{equation} \label{eq:reJtbound}
|\Re J_t(s)| \leq  \frac{|t|}{10}\log n
\end{equation}
for $y$ large enough, and so in particular $\Re \left(J_t(s)+\frac{|t|}{2}\log n\right) > 0$.

In the right half plane the integrand has no poles and decays rapidly in any vertical strip, so by Cauchy's theorem
\begin{equation} \label{eq:BtnL}
B_{t,n}(s)=\frac{1}{i \sqrt{\pi |t|}} \int_L \gamma(z) e^{\frac{1}{|t|}(J_t(s)-z)^2} n^{-z}\, dz.
\end{equation}

For all $z \in L$, note that \eqref{eq:reJtbound} implies $\Re z \asymp |t| \log n$ so we can apply Lemma~\ref{gboundlemma} to bound $\gamma(z)$, which gives $\gamma(z) \ll \exp(K'' |t| \log^{1.1} n)$ for some $K'' > 0$. Applying this bound and \eqref{eq:completesquare} to \eqref{eq:BtnL}, we can bound $B_{t,n}(s)$ by,
\begin{align*}
&\ll \frac{\exp(K'' |t| \log^{1.1} n)}{\sqrt{|t|}} \int_L \left|e^{\frac{1}{|t|}(J_t(s)-z)^2} n^{-z}\right|\, |dz| \\
&= \exp\left(-\Re \left(J_t(s)+\frac{|t|}{4}\log n\right)\log n\right)\frac{\exp(K'' |t| \log^{1.1} n)}{\sqrt{|t|}} \int_{-\infty}^{\infty} e^{-u^2/|t|}\, du \\
&\ll \exp\left(-\Re \left(J_t(s)+\frac{|t|}{4}\log n\right)\log n\right)\exp(K''|t| \log^{1.1} n).
\end{align*}
From \eqref{eq:reJtbound} it follows that $\Re \left(J_t(s)+\frac{|t|}{4}\log n\right) \geq \frac{|t|}{8} \log n$, so
\[
B_{t,n}(s) \ll \exp\left(-\frac{|t|}{8} \log^2 n\right) \exp(K'|t| \log^{1.1} n) \ll \exp\left(-\frac{|t|}{10} \log^2 n\right)
\]
for all $y$ sufficiently large (because this implies $\log n$ is large). 
\end{proof}
\subsection{Deducing Theorem \ref{mainthm} from Lemma \ref{mainlemma}}
Recall that $\xi^F_t(J_t(s))= \sum_{n=1}^\infty a_n B_{t,n}(s)$. We will split this sum into small, medium, and large $n$ as designated by Lemma \ref{mainlemma}, and then we will approximate each of the terms using the lemma.

It will be notationally convenient to define,
\[
\widetilde{F_t}(x) \coloneqq \sum_{n=1}^{\infty} \exp\left(-\frac{|t|}{4}\log^2 n\right)\frac{|a_n|}{n^x}
\]
which is the sum one gets when applying the triangle inequality to $|F_t(s)|$.

For the small $n$ terms of the sum (i.e. those for which $|t| \log n \leq y^{1/3}$) the lemma gives
\begin{align*}
\sum_{\text{small $n$}} a_n B_{t,n}(s) &=  \gamma_t(s) \sum_{\text{small $n$}} \exp\left(-\frac{|t|}{4}\log^2 n\right) \frac{a_n}{n^s} (1+O(y^{-1/5})) \nonumber \\
 &=  \gamma_t(s)\left(\sum_{\text{small $n$}} \exp\left(-\frac{|t|}{4}\log^2 n\right) \frac{a_n}{n^s} + O(y^{-1/5}\widetilde{F_t}(x)) \right).
\end{align*}
Rewriting the sum in the main term as $F_t(s)$ minus a tail sum, we get
\begin{equation} \label{eq:smallnest}
=  \gamma_t(s)\left(F_t(s) +O\left(y^{-1/5} \widetilde{F_t}(x)+\sum_{\log n > y^{1/3}/|t|}\exp\left(-\frac{|t|}{4}\log^2 n\right) \frac{|a_n|}{n^x} \right)\right).
\end{equation}
For the medium $n$ terms (which satisfy $y^{1/3} < |t|\log n \leq y^{3/5}$) the lemma gives
\begin{align} \label{eq:mediumnest}
\sum_{\text{medium $n$}} a_n B_{t,n}(s) &=  \gamma_t(s) \times O\left(\sum_{\log n > y^{1/3}/|t|}\exp\left(-\frac{|t|}{8}\log^2 n\right) \frac{|a_n|}{n^x} \right).
\end{align}
Adding \eqref{eq:smallnest} and \eqref{eq:mediumnest}, we get an estimate for the sum over all small and medium $n$,
\begin{align*}
=  \gamma_t(s)\left(F_t(s) +O\left(y^{-1/5} \widetilde{F_t}(x)+\sum_{\log n > y^{1/3}/|t|}\exp\left(-\frac{|t|}{8}\log^2 n\right) \frac{|a_n|}{n^x} \right)\right).
\end{align*}

To bound the tail sum in the error term note that since $|t| \log n > y^{1/3}$ and $a_n=O(n^2)$, 
\begin{align*}
\exp\left(-\frac{|t|}{8}\log^2 n\right) \frac{|a_n|}{n^x} &\ll \left(n^{-y^{1/3}/8}\right) \left(n^{2-x} \right)\\
&\leq n^{-\frac{y^{1/3}}{10}}
\end{align*}
where the second inequality holds for all sufficiently large $y$ because $x = O(y^{1/4})$. Summing this bound over $n > \exp(y^{1/3}/|t|)$ and applying the bound $\sum_{n > R} n^{-M} \leq 2 R^{1-M}$ (which holds for all $R,M \geq 2$, say) we conclude that
\begin{equation} \label{eq:smediumnest}
\sum_{\log n < \frac{y^{3/5}}{|t|}} a_n B_{t,n}(s) =  \gamma_t(s)\left(F_t(s) +O\left(y^{-1/5} \widetilde{F_t}(x)+\exp\left(-\frac{y^{2/3}}{15|t|}\right)\right)\right).
\end{equation}
for all sufficiently large $y$.

Lastly, for large $n$ (which satisfy $|t|\log n > y^{3/5}$) the lemma tells us
\[
\sum_{\text{large $n$}} a_n B_{t,n}(s) = O\left(\sum_{\log n > \frac{y^{3/5}}{|t|}} \exp\left(-\frac{|t|}{10}\log^2 n\right) |a_n|\right),
\]
and by using the same method that was just used to bound the other tail sum, we deduce the bound
\[
\sum_{\text{large $n$}} a_n B_{t,n}(s) =O\left(\exp\left(-\frac{y^{6/5}}{15|t|}\right)\right).
\]
In order to add this into the small and medium $n$ estimate \eqref{eq:smediumnest} and absorb this error term into the existing error terms we first need to divide through by the $\gamma_t(s)$ factor. Since we have seen previously in \eqref{eq:gtlowerbound} that $\gamma_t(s)$ only decays exponentially in $y$, we can write the large $n$ sum bound as
\[
\sum_{\text{large $n$}} a_n B_{t,n}(s) = \gamma_t(s) \times O\left(\exp\left(-\frac{y^{6/5}}{20|t|}\right)\right).
\]
Adding together all of the small, medium, and large $n$ then gives
\begin{equation} \label{eq:allnest}
\xi^F_t(J_t(s)) =  \gamma_t(s)\left(F_t(s) +O\left(y^{-1/5} \widetilde{F_t}(x)+\exp\left(-\frac{y^{2/3}}{20|t|}\right)\right)\right).
\end{equation}

We conclude the proof by bounding $\widetilde{F_t}(x)$ to simplify the error term. First we consider the case when $x \leq -2$. By applying $a_n=O(n^2)$, we see that
\[
\widetilde{F_t}(x) \ll \sum_{n=1}^\infty \exp\left(-\frac{|t|}{4}\log^2 n+2|x|\log n\right).
\]
Truncating this sum to only those values of $n$ satisfying $\frac{|t|}{4}\log n \leq 3|x|$ will incur only an $O(1)$ error because this condition implies that all the terms in the tail are $O(n^{-2})$, so we have
\begin{align*}
\widetilde{F_t}(x) \ll \sum_{n \leq \exp(12|x|/|t|)} \exp\left(-\frac{|t|}{4}\log^2 n+2|x|\log n\right)+O(1).
\end{align*}
It is easy to verify that the maximum possible value of a term in the sum is $\exp\left(\frac{4|x|^2}{|t|}\right)$, and so bounding each term by this quantity gives
\[
\widetilde{F_t}(x) \ll \exp\left(\frac{4|x|^2}{|t|}\right)\exp\left(\frac{12|x|}{|t|}\right)+O(1) \ll \exp\left(\frac{10|x|^2}{|t|}\right),
\]
where the second inequality holds because $|t| \leq C$ and because we are in the case $x \leq -2$.

For $x>-2$, one can prove a bound which decays exponentially in $x$ by similar methods to what we used above, but in the interest of keeping the error term simple we instead choose to use the trivial bound coming from the monotonicity of $\widetilde{F_t}(x)$. Since $\widetilde{F_t}(x)$ is decreasing in $x$, we naturally have
\begin{equation} \label{eq:Ftbound}
\widetilde{F_t}(x) \ll \exp\left(\frac{10}{|t|}\min(x,-2)^2\right).
\end{equation}
Inserting this into \eqref{eq:allnest} gives the theorem.

\section{Proof of Lemmas} \label{lemmasection}
\begin{replemma}{gammasizelemma}
Let $\gamma(s)$ be one of the functions described in condition (iii) of the extended Selberg class definition. Let $D>0$ and $0 \leq \theta < 1$, and let $s$ be a complex number which is at least unit distance away from the poles and zeros of $\gamma$ and which satisfies $|\Re s| \leq D |\Im s|^\theta$. 
There exist $K,K' >0$ (depending on $\gamma, D,$ and $\theta$) such that
\[
\exp(-K' |\Im s|) \ll |\gamma(s)| \ll \exp(-K |\Im s|),
\]
where the implicit constants may depend on $\gamma, D,$ and $\theta$.
\end{replemma}
\begin{proof}
Recall that 
\[
\gamma(s) \coloneqq \alpha s^m (s-1)^m Q^s \prod_{i=1}^k \Gamma(\omega_i s + \mu_i).
\]

We may assume $|\Im s|$ is large, because the small $|\Im s|$ case follows by compactness. (Indeed, since $s$ is assumed to be bounded away from the poles and zeros of $\gamma$ and since $|\Re s| \leq D |\Im s|^\theta$, one sees that $|\gamma(s)| \asymp 1$ in the $|\Im s| = O(1)$ case.)

Note that the polynomial and exponential factors in $\gamma(s)$ are insignificant because if $|\Im s|$ is large and $|\Re s| \leq D |\Im s|^\theta$, then
\[
\exp\left(-|\Im s|^{\theta'}\right) \leq |s^m (s-1)^m Q^s| \leq \exp\left(|\Im s|^{\theta'}\right)
\]
for some $\theta < \theta' < 1$. Hence, it is enough to show that each $\Gamma$ factor obeys
\begin{equation}\label{eq:stirlingdecay}
\exp(-K'_i |\Im s|) \leq |\Gamma(\omega_i s + \mu_i)| \leq \exp(-K_i |\Im s|)
\end{equation}
for $K'_i, K_i>0$.

Fix some $i$ and let $z \coloneqq \omega_i s+\mu_i$. Since $\omega_i \in \R$ and $|\Im s|$ is large, we have $|\Im z| \asymp |\Im s|$ and $|\Re z| \ll |\Im z|^\theta$, and we may assume $|\Im z|$ is large as well. 

Stirling's approximation gives
\begin{align*}
|\Gamma(z)| &= \exp\left(\Re\left(z \Log z - z + \frac{1}{2}\Log\frac{2\pi}{z}\right)+O(1/|z|)\right) \\
&= \exp\left(-(\Im z) \Arg z + O\left(|\Im z|^{\theta'}\right)\right).
\end{align*}

Since $|\Re z| \ll |\Im z|^\theta$ and $|\Im z|$ is large, we deduce that $\pi/4 \leq \Arg z \leq 3\pi/4$ if $\Im z > 0$ and $-3\pi/4 \leq \Arg z \leq -\pi/4$ if $\Im z < 0$. Hence, we can conclude
\[
\exp(-C'|\Im z|) \leq |\Gamma(z)| \leq \exp(-C |\Im z|),
\]
and then \eqref{eq:stirlingdecay} follows since $|\Im z| \asymp |\Im s|$.
\end{proof}

\begin{replemma}{Rlemma}
Let
\[
h(x) \coloneqq \frac{1}{2\pi i} \int_{1-\infty i}^{1+\infty i} \prod_{j=1}^{k} \Gamma(a_j w + b_j) x^{-w}\, dw
\]
for some $a_j > 0$ and $b_j\in \C$ with $\Re b_j \geq 0$. Then there exists a $\delta>0$ (depending on the $a_j$ and $b_j$ values) such that $h(x) \ll e^{-x^{\delta}}$ for all $x \geq 1$.
\end{replemma}
\begin{proof}
Let $\mathcal{R}$ denote the set of functions $f \in C\left((0,\infty)\right)$ for which,
\begin{enumerate}[(i)]
\item there exists a $\delta>0$ such that $f(x) \ll e^{-x^\delta}$ for all $x \geq 1$, and
\item for every $\kappa > 0$, the bound $f(x) \ll_\kappa x^{-\kappa}$ holds for all $x>0$.\end{enumerate}
To prove the statement it clearly suffices to show that $h \in \mathcal{R}$. In order to do this we first make several observations about $\mathcal{R}$ and about Mellin transforms.

Note that for any function $f \in\mathcal{R}$, the bound (ii) implies that the Mellin transform
\[
F(w) \coloneqq \int_0^\infty f(x) x^w\, \frac{dx}{x}
\]
of $f$ is defined for all $\Re w > 0$. Moreover, if $F$ is integrable over some vertical line $\Re w = c > 0$, then $f$ can be recovered from $F$ via the inverse Mellin transform,
\[
f(x) = \frac{1}{2\pi i} \int_{c-\infty i}^{c+\infty i} F(w) x^{-w}\, dw.
\]
This follows directly from the Fourier inversion formula for $L^1$ functions if one performs the necessary changes of variables.

If $f$ and $g$ are functions in $\mathcal{R}$, we define their multiplicative convolution to be
\begin{equation} \label{eq:convdef}
f \star g(x) \coloneqq \int_{0}^\infty f\left(\frac{x}{y}\right)g(y)\, \frac{dy}{y},
\end{equation}
and we claim that $\mathcal{R}$ is closed under this operation. Note that if $f,g\in\mathcal{R}$, then $f \star g$ is continuous by a standard application of the dominated convergence theorem, so it suffices to show that this function satisfies the bounds (i) and (ii). 

Let $\delta>0$ be small enough so that both $f(y) \ll e^{-y^\delta}$ and $g(y) \ll e^{-y^\delta}$ for all $y\geq 1$. For $x\geq 1$, we bound the $f \star g(x)$ integral by splitting it into two parts,
\begin{align*}
f \star g(x) &= \left(\int_0^{x^{1/2}}+\int_{x^{1/2}}^\infty\right)f\left(\frac{x}{y}\right)g(y)\,\frac{dy}{y},
\end{align*}
which we denote by I and II respectively.

To bound II, apply $f\left(\frac{x}{y}\right) \ll \left(\frac{x}{y}\right)^{-\delta}$ and $g(y) \ll e^{-y^\delta}$ to get
\[
\text{II} \ll \int_{x^{1/2}}^\infty \left(\frac{x}{y}\right)^{-\delta} e^{-y^\delta}\, \frac{dy}{y} 
= x^{-\delta} \int_{x^{1/2}}^\infty y^{\delta-1} e^{-y^\delta}\, dy
\ll e^{-x^{\delta/2}}.
\]
To bound I, note that the substitution $y' \coloneqq \frac{x}{y}$ turns this integral into an integral which is identical to II but with the roles of $f$ and $g$ reversed. Hence the same bound holds, and we can conclude that
\[
f\star g(x) \ll e^{-x^{\delta/2}}
\]
for all $x \geq 1$. 

It remains to show that $f\star g(x) \ll_\kappa x^{-\kappa}$ for all $x>0$ and  $\kappa>0$. Fixing some $\kappa > 0$, it suffices to prove this bound for $0< x <1$ because we have already proved a superior bound when $x \geq 1$. To obtain the bound when $x<1$, we split the $f\star g(x)$ integral into three parts
\begin{align*}
f \star g(x) = \left(\int_0^x + \int_x^1+\int_1^\infty\right)f\left(\frac{x}{y}\right)g(y)\,\frac{dy}{y}
\end{align*}
which we denote by I, II, and III.

For III, the bounds $f\left(\frac{x}{y}\right) \ll \left(\frac{x}{y}\right)^{-\kappa}$ and $g(y) \ll e^{-y^\delta}$ hold, so
\[
\text{III} \ll \int_1^\infty \left(\frac{x}{y}\right)^{-\kappa} e^{-y^\delta}\, \frac{dy}{y} \ll x^{-\kappa}.
\]
For II, the bounds $f\left(\frac{x}{y}\right) \ll \left(\frac{x}{y}\right)^{-\kappa/2}$ and $g(y) \ll y^{-\kappa/2}$ hold, so
\[
\text{II} \ll \int_x^1 \left(\frac{x}{y}\right)^{-\kappa/2} y^{-\kappa/2}\, \frac{dy}{y}
= x^{-\kappa/2} \log{\frac{1}{x}} \ \ll x^{-\kappa}.
\]
For I, we can again apply the substitution $y' \coloneqq \frac{x}{y}$ to turn this integral into an identical integral to III but with the roles of $f$ and $g$ reversed. Hence,
\[
f\star g(x) \ll x^{-\kappa},
\]
so we can conclude that $f \star g \in \mathcal{R}$.

Now suppose that $f, g \in \mathcal{R}$ with $F,G$ their respective Mellin transforms. Using Fubini's theorem and a change of variables, it is straightforward to show that the Mellin transform of $f\star g$ is the product $FG$. Furthermore, if $F(w)G(w)$ is integrable over a line $\Re w = c >0$ then our previous remark about Mellin inversion implies that $f \star g$ is the inverse Mellin transform of $FG$. We may now apply this result inductively to prove the lemma.

Using the fact that the inverse Mellin transform of $\Gamma(w)$ is $e^{-x}$, one can verify that the inverse Mellin transform of $\Gamma(a_j w+b_j)$ is $\frac{1}{a_j}x^{b_j/a_j} e^{-x^{1/a_j}}$, which is a member of $\mathcal{R}$. Any product of these gamma functions will be integrable over the line $\Re w=2$ because of the exponential decay of the gamma function on vertical lines. Hence, the inverse Mellin transform
\[
h(x) = \frac{1}{2\pi i} \int_{2-\infty i}^{2+\infty i} \prod_{j=1}^{k} \Gamma(a_j w + b_j) x^{-w}\, dw
\]
is a multiplicative convolution of $k$ functions each of which are in $\mathcal{R}$, so $h$ is in $\mathcal{R}$ as well.

\end{proof}

\begin{replemma}{Ftzerolemma}
For any $t < 0$, $F_t$ has a zero.
\end{replemma}
\begin{proof}
Fix $t<0$, and recall that $F_t(s) \coloneqq \sum_{n=1}^{\infty} \exp\left(-\frac{|t|}{4}\log^2 n\right)\frac{a_n}{n^{s}}$ is an entire function. Furthermore, the bound \eqref{eq:Ftbound} on the function $\widetilde{F_t}(x) \coloneqq \sum_{n=1}^{\infty} \exp\left(-\frac{|t|}{4}\log^2 n\right)\frac{|a_n|}{n^{x}}$ implies
\[
|F_t(x+iy)| \leq \widetilde{F_t}(x) \ll \exp\left(\frac{10}{|t|}\min(x,-2)^2\right),
\]
so $F_t$ is of order at most two.

Now suppose for the sake of contradiction that $F_t$ has no zeros. Then by the Hadamard factorization theorem, $F_t(s)=\exp(P(s))$ where $P$ is a polynomial of degree at most two. Using the fact that $F_t(s)$ is uniformly bounded in all half-planes $\{\Re s > c\}$, one may verify that the only possible choices for $P(s)$ are polynomials of the form $P(s) = -\lambda s + \rho$  where $\lambda \geq 0$, $\rho \in \C$. Hence,
\[
\sum_{n=1}^{\infty} \exp\left(-\frac{|t|}{4}\log^2 n\right)\frac{a_n}{n^{s}} = e^{\rho} \exp(-\lambda s)
\]
for all $s\in\C$. Now note that the right-hand side of this equality is a generalized Dirichlet series with only one term whereas the left-hand side is a Dirichlet series with multiple terms (since $a_n$ is nonzero for more than one $n$ as we have noted previously). By the uniqueness of coefficients of generalized Dirichlet series such an equality is impossible, so we get a contradiction.
\end{proof}

\begin{replemma}{gapproxlemma}
For any $\eps > 0$, define the region $S_\eps \coloneqq \left\{|\Arg w| < \pi - \eps, |w|>\eps\right\}$. Let $z,z_0 \in S_\eps$ and $|z-z_0| \leq D |z_0|^{2/3}$ for some $D > 0$. Then for all $z, z_0$ that are at least unit distance away from the poles and zeros of $\gamma$, we have the estimate
\begin{multline*}
\gamma(z) = \gamma(z_0) \exp\left(\left(\log Q+\sum_{j=1}^{k} \omega_j \Log(\omega_j z_0)\right)(z-z_0)+\frac{\sum_{j=1}^k \omega_j}{2z_0} (z-z_0)^2\right) \\
\times \left(1+O\left(\frac{1+|z-z_0|}{|z_0|}+\frac{|z-z_0|^3}{|z_0|^2}\right)\right)
\end{multline*}
where the implicit constant may depend on $\eps$ and $D$.
\end{replemma}
\begin{proof}
We may assume that $|z_0|$ is large, as the small $|z_0|$ case holds by compactness. We allow all implicit constants to depend on $\eps$ and $D$.

Recall that $\gamma(z) \coloneqq \alpha z^m (z-1)^m Q^z \prod_{j=1}^{k} \Gamma(\omega_i z + \mu_i)$. We will consider the factors of $\gamma$ separately and show how each one differs when evaluating at $z$ versus $z_0$. 

To handle the polynomial factor $\alpha z^m (z-1)^m$, note that for all $|z_0|$ large we have
\[
z = z_0\left(1+ O\left(\frac{|z-z_0|}{|z_0|}\right)\right) \quad \text{ and } \quad z - 1= (z_0-1)\left(1+ O\left(\frac{|z-z_0|}{|z_0|}\right)\right),
\]
hence
\begin{equation} \label{eq:polyapprox}
\alpha z^m(z-1)^m = \alpha z_0^m (z_0-1)^m \left(1+O\left(\frac{|z-z_0|}{|z_0|}\right)\right).
\end{equation}

For the exponential term in $\gamma$, there is a trivial equality
\begin{equation} \label{eq:expapprox}
Q^z = Q^{z_0} \exp((\log Q)(z-z_0)).
\end{equation}

To get estimates for the $\Gamma$ factors in $\gamma$ we recall Stirling's formula which states that for any $w\in S_\eps$ with $|w| \geq 1$,
\begin{equation} \label{eq:stirlings}
\Gamma(w) = \sqrt{2\pi }\exp\left(w \Log w - w -\frac{1}{2} \Log w\right)\left(1 + O\left(\frac{1}{|w|}\right)\right).
\end{equation}

Suppose $w,w_0 \in S_\eps$ and $|w-w_0| \ll |w_0|^{2/3}$. By assuming $|w_0|$ is sufficiently large we have $\left|\Arg\left(1+\frac{w-w_0}{w_0}\right)\right| < \eps$, and we then deduce
\begin{equation} \label{eq:logid}
\Log w = \Log w_0 + \Log\left(1+\frac{w-w_0}{w_0}\right).
\end{equation}

By applying Stirling's formula at $w$ and $w_0$ (and assuming $|w_0|$ is large), then using \eqref{eq:logid} and the fact that $|w|\asymp |w_0|$ gives
\begin{align*}
\frac{\Gamma(w)}{\Gamma(w_0)} &= \exp\left(w \Log w - w - w_0 \Log w_0 + w_0 - \frac{1}{2} \Log\left(1+\frac{w-w_0}{w_0}\right) \right) \\
& \hspace{20em} \times \left(1+O\left(\frac{1}{|w_0|}\right)\right) \\
&= \exp\left(w \Log w - w - w_0 \Log w_0 + w_0 \right)\left(1+O\left(\frac{1+|w-w_0|}{|w_0|}\right)\right)
\end{align*}
where the second line follows from the first because $\Log(1+s) = O(|s|)$ for $|s| \leq \frac{1}{2}$, and $\exp(s) = 1 + O(|s|)$ for $s$ bounded.

If we consider just the expression inside the exponential, one can show that
\begin{multline*}
w \Log w - w - w_0 \Log w_0 + w_0 = \Log(w_0) (w-w_0) + \frac{1}{2 w_0} (w-w_0)^2 \\ + O\left(\frac{|w-w_0|^3}{|w_0|^2}\right)
\end{multline*}
by applying \eqref{eq:logid} to the logarithm, then applying the Taylor approximation $\Log(1+s) = s-\frac{s^2}{2}+O(s^3)$ which holds for all $|s| \leq \frac{1}{2}$, and gathering together error terms. 

Plugging this back into our expression for $\frac{\Gamma(w)}{\Gamma(w_0)}$ and then applying $\exp(1+s)=1+O(|s|)$ for $s$ bounded again,
\begin{multline} \label{eq:gammaratio}
\frac{\Gamma(w)}{\Gamma(w_0)} = \exp\left(\Log(w_0) (w-w_0) + \frac{1}{2 w_0} (w-w_0)^2\right) \\\times \left(1+O\left(\frac{1+|w-w_0|}{|w_0|}+\frac{|w-w_0|^3}{|w_0|^2}\right)\right).
\end{multline}

Taking $w \coloneqq \omega_i z + \mu_i$ and $w_0 \coloneqq \omega_i z_0 + \mu_i$ note that from the assumptions that $z,z_0 \in S_\eps$, $|z-z_0| \leq D |z_0|^{2/3}$, and $|z_0|$ is large, we get the corresponding facts that $w,w_0 \in S_{\eps'}$, $|w-w_0| \ll |w_0|^{2/3}$, and $|w_0|$ is large. Hence, by \eqref{eq:gammaratio}
\begin{multline} \label{eq:gammaratio2}
\frac{\Gamma(\omega_i z + \mu_i)}{\Gamma(\omega_i z_0 + \mu_i)} = \exp\left(\omega_i \Log(\omega_i z_0 + \mu_i) (z-z_0) + \frac{\omega_i^2}{2( \omega_i z_0 + \mu_i)} (z-z_0)^2\right) \\
\times \left(1+O\left(\frac{1+|z-z_0|}{|z_0|}+\frac{|z-z_0|^3}{|z_0|^2}\right)\right).
\end{multline}

When $|z_0|$ is large,
\[
\Log(\omega_i z_0 + \mu_i) = \Log(\omega_i z_0) + O\left(\frac{1}{|z_0|}\right)
\]
and
\[
\frac{1}{\omega_i z_0 + \mu_i} = \frac{1}{\omega_i z_0} + O\left(\frac{1}{|z_0|^2}\right).
\]
Inserting these into \eqref{eq:gammaratio2} gives
\begin{multline} \label{eq:gammaratio3}
\frac{\Gamma(\omega_i z + \mu_i)}{\Gamma(\omega_i z_0 + \mu_i)} = \exp\left(\omega_i \Log(\omega_i z_0) (z-z_0) + \frac{\omega_i}{2z_0} (z-z_0)^2\right) \\
\times \left(1+O\left(\frac{1+|z-z_0|}{|z_0|}+\frac{|z-z_0|^3}{|z_0|^2}\right)\right).
\end{multline}

Applying \eqref{eq:gammaratio3} to each $\Gamma$ factor in $\gamma$ and combining this with \eqref{eq:polyapprox} and \eqref{eq:expapprox} gives the lemma.

\end{proof}

\begin{replemma}{integrandboundlemma}
Let $t$, $s=x+iy$, and $C$ satisfy the same hypotheses as in Theorem \ref{mainthm}. Let $n \leq \exp(y^{3/5}/|t|)$ (i.e. the small/medium case of Lemma~\ref{mainlemma}) and let
\[
\mathcal{I}(z) \coloneqq \gamma(z)e^{\frac{1}{|t|}(J_t(s)-z)^2}n^{-z}
\]
be the integrand of \eqref{eq:Btndef}. Then for all $y$ sufficiently large (depending on $C$) the following estimates hold (where the implicit constants may also depend on $C$):
\begin{enumerate}[(i)]
\item If $|x-\Re z| \leq y^{3/5}$ and $\frac{1}{2} y^{2/3} \leq |y-\Im z| \leq 2y^{2/3}$ then 
\[
\mathcal{I}(z) \ll \exp\left(-\frac{y^{4/3}}{10|t|}\right).
\]
\item If $\Re z = 2$ and $|y-\Im z| \geq y^{2/3}$ then
\[
\mathcal{I}(z) \ll \exp\left(-\frac{1}{2|t|}(y-\Im z)^{2}\right).
\]
\end{enumerate}
\end{replemma}
\begin{proof}
We shall bound the three factors of $\mathcal{I}(z)$ individually and prove (i) and (ii) in parallel. We allow all implicit constants to depend on $C$, and we freely assume that $y$ large enough to make various assumptions hold.

From the definition \eqref{eq:Jtdef} of $J_t(s)$, and the assumptions that $x =O(y^{1/4})$ and $t =O(1)$ it is clear that
\[
\Im J_t(s) = y + O(1) \quad\text{ and }\quad \Re J_t(s) = x+O(\log y),
\]
so if $z$ satisfies the hypotheses in either (i) or (ii), then
\begin{align*}
\Re\left( ( J_t(s)-z)^2\right) &= (x+O(\log y)-\Re z)^2-(y+O(1)-\Im z)^2 \\
&\leq -\frac{1}{2}(y-\Im z)^2.
\end{align*}
for all $y$ sufficiently large (where we have used the assumption that $x \ll y^{1/4}$ again in the (ii) case). Hence we can bound the first exponential factor in $\mathcal{I}(z)$ by
\begin{equation} \label{eq:expJtbound}
\left| e^{\frac{1}{|t|}(J_t(s)-z)^2} \right | \leq  \exp\left(-\frac{1}{2|t|}(y-\Im z)^{2}\right).
\end{equation}

To handle the $\gamma(z)$ factor we can apply Lemma \ref{gammasizelemma}. Note that the hypotheses in (i) imply
\[
|\Re z| \leq |x|+y^{3/5} \ll y^{3/5} \quad\text{ and }\quad \Im z \asymp y
\]
and the in case (ii) we have $\Re z = 2$, so in both cases Lemma \ref{gammasizelemma} implies that $\gamma(z) = O(1)$.

In (ii) it is clear that the $n^{-z}=O(1)$, and in (i) note that  $\Re z \ll y^{3/5}$ and $\log n \leq y^{3/5}/|t|$, so
\[
n^{-z} = \exp(-(\Re z) \log n) \leq \exp\left(\frac{K y^{6/5}}{|t|}\right).
\]

Hence, to get the conclusion in (i), we put together the above bounds and the fact that $|y-\Im z| \geq \frac{1}{2}y^{2/3}$ which gives
\begin{align*}
\mathcal{I}(z) &\ll \exp\left(-\frac{1}{2|t|}(y-\Im z)^{2}\right)\exp\left(\frac{K y^{6/5}}{|t|}\right) \\
&\leq \exp\left(-\frac{y^{4/3}}{8|t|}\right)\exp\left(\frac{K y^{6/5}}{|t|}\right) \\
&\leq  \exp\left(-\frac{y^{4/3}}{10|t|}\right) 
\end{align*}
for all $y$ sufficiently large.

Similarly, the conclusion of (ii) is immediate from \eqref{eq:expJtbound} and the $O(1)$ bounds on the other two factors.

\end{proof}

\begin{replemma}{gboundlemma}
There is some $K>0$ such that $|\gamma(z)| \leq \exp(K (\Re z)^{1.1})$ uniformly for any $z$ with $\Re z \geq 1$.
\end{replemma}
\begin{proof}
Recall that 
\[
\gamma(z) \coloneqq \alpha z^m (z-1)^m Q^z \prod_{i=1}^k \Gamma(\omega_i z + \mu_i).
\]

Consider two cases where either $|\Im z| > (\Re z)^2$ or  $|\Im z| \leq (\Re z)^2$. In the first case, we see $\gamma(z) = O(1)$ by Lemma~\ref{gammasizelemma}. In the second case, one can bound each of the $\Gamma$ factors in $\gamma$ with the bound
\[
|\Gamma(\omega_i z + \mu_i)| \leq \Gamma(\Re(\omega_i z + \mu_i)) \leq \exp(K_0 (\Re z)^{1.1})
\]
for some $K_0 > 0$. Since, $|z| \ll (\Re z)^2$ and $|Q^z| = \exp((\log Q)(\Re z))$ the bound follows.
\end{proof}

\bibliographystyle{amsplain}
\bibliography{references}
\end{document}